\documentclass[11pt]{amsart}

\usepackage{latexsym}
\usepackage{amssymb}
\usepackage{amsmath}



%



\newfont{\formb}{cmb10.tfm}
\newfont{\formbs}{cmr8.tfm}

\newfont{\tit}{cmssbx10}
\newfont{\titt}{cmssbx10 scaled\magstep2}
\newfont{\lle}{cmtt12}



\numberwithin{equation}{section}

\def\NN{\mathbb N}

\def\RR{\mathbb R}

\def\R{\mathbb R}

 \def\1{\raisebox{2pt}{\rm{$\chi$}}}

 \newcommand{\eps}{{\varepsilon}}
 
 \def\1{\raisebox{2pt}{\rm{$\chi$}}}
 
\usepackage{enumerate}

%
%
%
%
\def\vint_#1{\mathchoice%
          {\mathop{\kern 0.2em\vrule width 0.6em height 0.69678ex depth -0.58065ex
                  \kern -0.8em \intop}\nolimits_{\kern -0.4em#1}}%
          {\mathop{\kern 0.1em\vrule width 0.5em height 0.69678ex depth -0.60387ex
                  \kern -0.6em \intop}\nolimits_{#1}}%
          {\mathop{\kern 0.1em\vrule width 0.5em height 0.69678ex
              depth -0.60387ex
                  \kern -0.6em \intop}\nolimits_{#1}}%
          {\mathop{\kern 0.1em\vrule width 0.5em height 0.69678ex depth -0.60387ex
                  \kern -0.6em \intop}\nolimits_{#1}}}
\def\vintslides_#1{\mathchoice%
          {\mathop{\kern 0.1em\vrule width 0.5em height 0.697ex depth -0.581ex
                  \kern -0.6em \intop}\nolimits_{\kern -0.4em#1}}%
          {\mathop{\kern 0.1em\vrule width 0.3em height 0.697ex depth -0.604ex
                  \kern -0.4em \intop}\nolimits_{#1}}%
          {\mathop{\kern 0.1em\vrule width 0.3em height 0.697ex depth -0.604ex
                  \kern -0.4em \intop}\nolimits_{#1}}%
          {\mathop{\kern 0.1em\vrule width 0.3em height 0.697ex depth -0.604ex
                  \kern -0.4em \intop}\nolimits_{#1}}}

\newcommand{\aveint}[2]{\mathchoice%
          {\mathop{\kern 0.2em\vrule width 0.6em height 0.69678ex depth -0.58065ex
                  \kern -0.8em \intop}\nolimits_{\kern -0.45em#1}^{#2}}%
          {\mathop{\kern 0.1em\vrule width 0.5em height 0.69678ex depth -0.60387ex
                  \kern -0.6em \intop}\nolimits_{#1}^{#2}}%
          {\mathop{\kern 0.1em\vrule width 0.5em height 0.69678ex depth -0.60387ex
                  \kern -0.6em \intop}\nolimits_{#1}^{#2}}%
          {\mathop{\kern 0.1em\vrule width 0.5em height 0.69678ex depth -0.60387ex
                  \kern -0.6em \intop}\nolimits_{#1}^{#2}}}

\begin{document}

\parskip=5pt

\def\a{{\bf a}}
\def\b{{\bf b}}
\def\z{{\bf z}}
\def\e{{\bf e}}
\def\g{{\bf g}}
\def\L{{\mathcal L}}
\def\zbar{{\bf \overline{z}}}
\def\ubar{\overline{u}}
\def\eps{\epsilon}
\def\step{\Delta t}
\def\fbar{\overline{f}}
\def\1{\raisebox{2pt}{\rm{$\chi$}}}

\newtheorem{theorem}{Theorem}
\newtheorem{corollary}[theorem]{Corollary}
\newtheorem{lemma}[theorem]{Lemma}
\newtheorem{proposition}[theorem]{Proposici{\'o}n}
\newtheorem{definition}[theorem]{Definition}
\newtheorem{remark}[theorem]{Remark}
\newtheorem{example}[theorem]{Example}

\newtheorem{teo}{Teorema}[section]


\bibliographystyle{plain}

\title[An obstacle problem for Tug-of-War games]
{\bf An obstacle problem for Tug-of-War games}

\author[J. J. Manfredi, J. D. Rossi
and S. J. Somersille]{Juan J. Manfredi, Julio D. Rossi and Stephanie J.
Somersille}

\address{Juan J. Manfredi
\hfill\break\indent
Department of Mathematics,
University of Pittsburgh. Pittsburgh, PA 15260. USA. }
\email{{\tt manfredi@pitt.edu}}

\address{Julio D. Rossi
\hfill\break\indent
Departamento de An{\'a}lisis Matem{\'a}tico, Universidad de Alicante, Ap 99, 03080, Alicante, SPAIN.
\hfill\break\indent {\rm and}
\hfill\break\indent
Departamento de Matem\'{a}tica, FCEyN Universidad de Buenos Aires,
Ciudad Universitaria, Pab 1 (1428),
Buenos Aires,
ARGENTINA. }
\email{{\tt julio.rossi@ua.es}}

\address{Stephanie J.
Somersille
\hfill\break\indent
Department of Mathematics,
Dartmouth College.
Hanover, NH 03755. USA. }
\email{{\tt Stephanie.J.Somersille@Dartmouth.edu}}

\keywords{Obstacle Problem, Tug-of-War games, infinity laplacian \\
\indent 2010 {\it Mathematics Subject Classification.} {35J60, 91A05, 49L25, 35J25.}}

\begin{abstract}
We consider the  obstacle problem  for the infinity Laplace equation.
Given a Lipschitz boundary function and a Lipschitz obstacle we prove the
existence and uniqueness of a super infinity-harmonic function constrained to lie above
the obstacle which is infinity harmonic where it lies strictly above the obstacle.
Moreover, we show that this function is the limit of
value functions of a game we call obstacle tug-of-war.
\end{abstract}

\maketitle

\begin{section}{Introduction} \label{sec-intro}
\setcounter{equation}{0}

Recently, Peres, Schram, Sheffield and Wilson, \cite{PSSW},
discovered the relationship between limits of value functions of
Tug-of-War games and solutions to the infinity Laplacian. Also,
Peres and Sheffield, \cite{PS}, found a game whose values
approximate solutions to the $p-$Laplacian, see also the work by
Manfredi, Parvianen and Rossi,
\cite{MPR},
\cite{MPR2},
\cite{MPR3}, \cite{MPR4}, Bjorland, Caffarelli and Figalli,
\cite{BCF},
by Armstrong, Smart and Somersille, \cite{AM}, by Peres, Pet\'e and Somersille, \cite{Peres},
and by Antunov\'ic, Peres, Sheffield and Somersille, \cite{APSS}.

Our main goal in this work is to study the obstacle problem in
this context. That is, we propose a game that involves a function
$\Psi$, (the
obstacle), and is such that the value function of the game is above it.
We prove existence, uniqueness and some properties of the
value functions of this game and we find that a certain limit of these functions
is a viscosity solution of the obstacle problem  for the infinity
Laplacian.

Next, let us describe briefly the game in which we are interested.
The Tug-of-War game in \cite{PSSW} and described in detail below is a
two player zero sum game. In our case it is played
in a bounded domain $\Omega \subset \RR^N$ with a given ``boundary''
function $F:\Gamma
\mapsto
\RR$ (here $\Gamma$ is a neighbourhood of $\partial \Omega$ in
$\RR^N
\setminus
\Omega$). In our modification we also have an obstacle $\Psi :
\RR^N
\mapsto \RR$ such that $\Psi
\leq F$ in $\Gamma$. As in ordinary Tug-of-War if the boundary is reached at $x_n\in \Gamma$
 then Player I receives $F(x_n)$. However, in our case, Player I can opt to stop the game at any
position $x_n \in \Omega$ and
receive the payoff $\Psi (x_n)$.

This is much like the case in American options where investors can exercise the option at any time up to expiry
and accept a payoff equal to the intrinsic value which in our case is the obstacle. Or they may wait (continue to play) if the expected benefit of waiting is greater than the intrinsic value. Tug of war and the infinity Laplacian has applications to mass transport problems, control theory and economic modeling among others. Specifically, it is our belief that our results may have applications to the pricing of American options.

Our game is similar to investing in American Options in that optimal strategies are to stop where the value function agrees with the intrinsic value. Underlying our proof is the idea that Player I will choose to stop where his value function is equal to the obstacle. However, that he does so is not a requirement for any of our proofs.


We have the following results concerning properties of $u^\epsilon$ the value function of this
game where $\epsilon$ indicates the maximum size of each move:

\begin{theorem} \label{existencia.unicidad.teo.intro}
There exists a unique value of the game. This value is the
solution to the discrete obstacle problem; that is, it satisfies
$$
u^\epsilon(x)\geq \frac{1}{2}  \sup_{y\in {B}_\epsilon (x) } u^\epsilon(y) +
\frac{1}{2} \inf_{y \in  {B}_\epsilon (x) } u^\epsilon(y),
$$
lies above the obstacle in $\Omega$, coincides with $F$ in $\Gamma$,  and
is such that the
above inequality is an equality where $u^\epsilon$ lies strictly above the obstacle.
\end{theorem}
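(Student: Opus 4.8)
The plan is to recast the game value as the unique solution of a single functional equation and then identify it with the value of the game by martingale arguments. Because Player~I may either continue playing or stop and collect $\Psi$, and since he maximizes, the value $u^\epsilon$ should obey the dynamic programming principle
\[
u^\epsilon(x)=\max\Big\{\Psi(x),\ \tfrac12\sup_{\overline{B}_\epsilon(x)}u^\epsilon+\tfrac12\inf_{\overline{B}_\epsilon(x)}u^\epsilon\Big\},\qquad x\in\Omega,
\]
together with $u^\epsilon=F$ on $\Gamma$. This single equation already encodes all four assertions: the maximum forces $u^\epsilon\ge\Psi$ in $\Omega$ (lies above the obstacle) and $u^\epsilon\ge\tfrac12\sup+\tfrac12\inf$ (the stated inequality), the boundary clause gives $u^\epsilon=F$ on $\Gamma$, and wherever $u^\epsilon(x)>\Psi(x)$ the maximum is attained by the second argument, giving the equality. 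Thus it suffices to (i) produce a solution of this equation, (ii) show it is the game value, and (iii) deduce uniqueness.

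For existence I would use monotone iteration. Let $T$ be the operator defined by the right-hand side above in $\Omega$ and by $F$ on $\Gamma$; it is order preserving. The function $\underline u$ equal to $\Psi$ in $\Omega$ and to $F$ on $\Gamma$ is a subsolution ($T\underline u\ge\underline u$), while a sufficiently large constant in $\Omega$ glued to $F$ on $\Gamma$ is a supersolution ($T\overline u\le\overline u$), and $\underline u\le\overline u$ since $\Psi\le F$ on $\Gamma$. Iterating $T^k\overline u$ produces a decreasing sequence bounded below by $\underline u$; its pointwise limit $u^\epsilon$ satisfies $\Psi\le u^\epsilon\le\overline u$, and one checks that it is a fixed point of $T$. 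The only care needed here is passing the suprema and infima over $\overline{B}_\epsilon(x)$ through the monotone limit, which is where the compactness of the closed balls and the semicontinuity of the limit enter.

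To identify $u^\epsilon$ with the game value, introduce the lower and upper values $u_\I=\sup_{S_\I}\inf_{S_\II}\mathbb{E}[\,\text{payoff}\,]$ and $u_\II=\inf_{S_\II}\sup_{S_\I}\mathbb{E}[\,\text{payoff}\,]$, for which $u_\I\le u_\II$ always holds. Fix a solution $u^\epsilon$ of the equation. If Player~II always pulls toward a near-minimizer of $u^\epsilon$ in $\overline{B}_\epsilon(x_k)$, then using the averaging inequality together with $\Psi\le u^\epsilon$ the sequence $u^\epsilon(x_k)$ is a supermartingale under \emph{every} strategy of Player~I; optional stopping at the exit-or-stop time $\tau$, where the payoff is at most $u^\epsilon(x_\tau)$, gives $u_\II\le u^\epsilon$. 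Symmetrically, if Player~I pulls toward a near-maximizer of $u^\epsilon$ and stops as soon as the token reaches the contact set $\{u^\epsilon=\Psi\}$, then in the region $\{u^\epsilon>\Psi\}$ the equality turns $u^\epsilon(x_k)$ into a submartingale and at $\tau$ the payoff equals $u^\epsilon(x_\tau)$, yielding $u_\I\ge u^\epsilon$. Combining, $u_\I=u^\epsilon=u_\II$, so the game has a value equal to $u^\epsilon$; since the value of the game is intrinsically defined, any solution of the functional equation must equal it, which gives uniqueness.

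The main obstacle is the probabilistic identification step rather than the algebra. Two points require real work: proving that the game terminates almost surely, so that optional stopping is legitimate, which rests on the fact that the symmetric random component of the dynamics forces the token out of the bounded $\Gamma$-layer in finite expected time; and handling the stopping option together with the failure of the suprema and infima to be attained, so that the players' $\epsilon$-optimal moves and Player~I's decision to stop exactly on $\{u^\epsilon=\Psi\}$ define legitimate measurable strategies whose accumulated errors are controlled by the finiteness of $\tau$. I expect the interplay between the almost sure termination and Player~I's stopping rule to be the delicate heart of the argument.
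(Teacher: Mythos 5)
Your architecture (produce a fixed point of the dynamic programming equation by monotone iteration, then identify it with the game value by supermartingale/submartingale strategies and optional stopping) is a genuinely different route from the paper's, which constructs no strategies at this level at all: the paper takes the DPP for $u_{I}^\epsilon$ and $u_{II}^\epsilon$ and the basic theory of leavable games from \cite{MS}, and then proves $u_{II}^\epsilon\le u_{I}^\epsilon$ purely by comparison with \emph{ordinary} tug-of-war. Off the contact set $A$ each value function is discrete $\epsilon$ infinity harmonic, hence by the uniqueness theorem of \cite{PSSW} it coincides with the value $w^\epsilon$ of the obstacle-free game on $\Omega\setminus A$ with augmented boundary $Y=\Gamma\cup A$ and payoff $\hat F$ ($=F$ on $\Gamma$, $=\Psi$ on $A$), while Remark \ref{nota7} gives $w^\epsilon\le u_{I}^\epsilon$ because Player I can imitate the obstacle-free strategy and stop upon reaching $A$ (this is the content of Lemmas \ref{lema.supersol} and \ref{lema.subsol}). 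Your fixed-point half is sound as sketched: the infimum commutes with a decreasing limit automatically, and the supremum requires exactly the compactness-plus-semicontinuity (Dini-type) argument you flag.

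The genuine gap is in the identification half, at the termination step, and the justification you offer for it is false. It is not true that ``the symmetric random component of the dynamics forces the token out of the bounded $\Gamma$-layer in finite expected time'': strategies are position-dependent, and if, say, both players always pull toward a common fixed interior point, the token stays in a compact subset of $\Omega$ forever. Termination does hold when a player pulls toward a \emph{fixed} target (the distance to the target is then dominated by a recurrent $\pm\epsilon$ walk), but your $\epsilon$-optimal strategies pull toward near-extremizers of $u^\epsilon$, a moving target, and no recurrence argument applies. This matters because of the $\pm\infty$ penalties in the definitions of $V_{x_0,I}$ and $V_{x_0,II}$: to get $u_{II}^\epsilon\le u^\epsilon$, Player II's single strategy must force a.s.\ termination against \emph{every} strategy and stopping rule of Player I (otherwise $V_{x_0,II}=+\infty$ and the supremum over Player I's choices blows up); symmetrically, for $u_{I}^\epsilon\ge u^\epsilon$ the stopping option does not rescue you, since the submartingale bound is vacuous on plays that reach neither $\Gamma$ nor the contact set, so Player I's strategy-plus-stopping-rule must itself be shown to terminate a.s.\ while sacrificing only a controlled error. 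Supplying this is precisely the hard content of Theorem 2.2 of \cite{PSSW} (and of the leavable-game theory in \cite{MS}), and the paper's proof is structured exactly so as to outsource it to those references rather than redo it. So your plan is completable in principle, but the step you call the ``delicate heart'' is not merely delicate: as justified it is wrong, and without a correct termination construction neither the existence of the value nor its identification with the fixed point is established.
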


In addition, a comparison principle holds.
\begin{lemma} \label{comp.princ}
Let $u_1^\epsilon$, $u_2^\epsilon$ be
values of the $\epsilon$ games with boundary functions $F_1$, $F_2$ and
obstacles $\Psi_1$, $\Psi_2$ respectively.
If $F_1 \geq F_2$ and $\Psi_1
\geq
\Psi_2$ , then $u_1^\epsilon \geq u_2^\epsilon$.
\end{lemma}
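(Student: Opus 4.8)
The plan is to establish the comparison principle directly from the game interpretation rather than from the discrete equation, since $u_1^\epsilon$ and $u_2^\epsilon$ are \emph{defined} as the values of the corresponding obstacle tug-of-war games. Recall that for data $(F_i,\Psi_i)$ the value at a starting position $x_0$ is
\[
u_i^\epsilon(x_0)=\sup_{S_I}\inf_{S_{II}}\mathbb{E}^{x_0}_{S_I,S_{II}}[\textrm{payoff}_i],
\]
where the supremum is over Player I's strategies (including his decision of whether and when to stop), the infimum is over Player II's strategies, and $\textrm{payoff}_i=\Psi_i(x_\tau)$ if Player I elects to stop at an interior position $x_\tau\in\Omega$, while $\textrm{payoff}_i=F_i(x_\tau)$ if the token first reaches $\Gamma$ at $x_\tau$. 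That the two orders of $\sup$/$\inf$ agree and the value is attained is guaranteed by Theorem~\ref{existencia.unicidad.teo.intro}.

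The key idea is a coupling argument. First I would fix an arbitrary strategy pair $(S_I,S_{II})$ and run both games simultaneously, driven by the same sequence of fair coin flips. Because the transition mechanism of tug-of-war depends only on the current position and the coin outcome, and not on the data $F_i$ or $\Psi_i$, the two games produce the \emph{identical} sequence of positions $x_0,x_1,x_2,\dots$ and the identical stopping time $\tau$, namely the minimum of Player I's chosen stopping time and the first hitting time of $\Gamma$.

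Next I would compare payoffs along this common trajectory. If $\tau$ corresponds to Player I stopping at $x_\tau\in\Omega$, then $\textrm{payoff}_1=\Psi_1(x_\tau)\geq\Psi_2(x_\tau)=\textrm{payoff}_2$ by hypothesis; if instead $\tau$ is the exit time to $\Gamma$, then $\textrm{payoff}_1=F_1(x_\tau)\geq F_2(x_\tau)=\textrm{payoff}_2$. Thus, realization by realization, the payoff in game $1$ dominates that in game $2$, and taking expectations gives
\[
\mathbb{E}^{x_0}_{S_I,S_{II}}[\textrm{payoff}_1]\geq\mathbb{E}^{x_0}_{S_I,S_{II}}[\textrm{payoff}_2].
\]
Since this holds for \emph{every} strategy pair, I would push the inequality through the minimax operations: taking $\inf_{S_{II}}$ on both sides preserves it, and then taking $\sup_{S_I}$ preserves it as well, yielding $u_1^\epsilon(x_0)\geq u_2^\epsilon(x_0)$ at the arbitrary point $x_0$.

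The main obstacle will be the bookkeeping for plays that never terminate. One must fix a convention for the payoff on the event $\{\tau=\infty\}$ and check that the coupling keeps the two payoffs comparable there, and also confirm that under this convention the value is finite and attained in the minimax sense so that the monotonicity of $\sup\inf$ genuinely applies. Here the Lipschitz bounds on the data (and comparison against constant obstacles) make the terminal payoffs uniformly bounded, so this event is controlled and the argument closes.
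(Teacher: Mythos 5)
Your argument is correct, but it takes a genuinely different route from the paper's. The paper never writes out a separate proof of Lemma \ref{comp.princ}: the comparison is implicit in Lemma \ref{lema.supersol}, which characterizes $u_I^\epsilon$ as the smallest supersolution of the DPP. One applies it with $v=u_1^\epsilon$ and data $(F_2,\Psi_2)$ --- since $u_1^\epsilon=F_1\geq F_2$ on $\Gamma$, $u_1^\epsilon\geq\Psi_1\geq\Psi_2$ in $\Omega$, and $u_1^\epsilon$ satisfies the super-mean-value inequality by the DPP --- to conclude $u_2^\epsilon=u_{2,I}^\epsilon\leq u_1^\epsilon$; that lemma in turn leans, through Remark \ref{nota7}, on the uniqueness theory for ordinary tug-of-war from \cite{PSSW}. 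You instead argue directly at the level of the game, and this works: the measure $\mathbb{P}^{x_0}_{\tau,S_I,S_{II}}$ on plays depends only on $x_0$ and the profile $(\tau,S_I,S_{II})$, not on the data, so strictly speaking you do not even need a coupling --- the trajectory laws in the two games are literally the same measure, and since $\tilde F_1\geq\tilde F_2$ pointwise on $\Omega\cup\Gamma$, the expected payoffs compare profile by profile; monotonicity of $\inf$ and $\sup$ then yields $u_{1,I}^\epsilon\geq u_{2,I}^\epsilon$, and the conclusion follows because the value exists ($u_I^\epsilon=u_{II}^\epsilon$, Theorem \ref{existencia.unicidad.teo.intro}). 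Your final worry resolves even more cleanly than you suggest: whether the game terminates a.s.\ is a property of $(\tau,S_I,S_{II})$ and $x_0$ alone, so on nonterminating profiles both payoffs equal $-\infty$ under the paper's convention and the inequality persists in $[-\infty,+\infty]$; moreover, neither finiteness nor attainment of the value is needed, since $\sup\inf$ preserves pointwise inequalities of extended-real-valued functions, so your appeal to boundedness, while true, is superfluous. What each approach buys: yours is elementary and self-contained, using only the definition of the value and monotonicity of expectation, and it transfers verbatim to variants of the game; the paper's route piggybacks on the supersolution machinery, which costs nothing extra there because that machinery is already required for the existence and uniqueness proof.
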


Moreover, the value function of the game satisfies the following
Lewy-Stampacchia inequalities
\begin{lemma} \label{lewy.stamp}
We have
$$
\displaystyle 0 \leq u^\epsilon (x) - \frac{1}{2} \left( \sup_{B_\epsilon (x) }
u^\epsilon  +
\inf_{B_\epsilon (x) } u^\epsilon \right)
\displaystyle \leq
\left[ \ \Psi (x) - \frac{1}{2} \left( \sup_{B_\epsilon (x) }
\Psi  +
\inf_{B_\epsilon (x) } \Psi \right) \right]_+.
$$
\end{lemma}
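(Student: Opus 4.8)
The plan is to read the two inequalities separately. The lower bound $0 \le u^\epsilon(x) - \tfrac12\big(\sup_{B_\epsilon(x)} u^\epsilon + \inf_{B_\epsilon(x)} u^\epsilon\big)$ is exactly the discrete supersolution inequality already recorded in Theorem \ref{existencia.unicidad.teo.intro}, so no work is needed there and all the content sits in the upper bound. Writing $\mathcal{A}v(x) := \tfrac12\big(\sup_{B_\epsilon(x)} v + \inf_{B_\epsilon(x)} v\big)$ for the averaging operator, the structural fact I would isolate first is that $\mathcal{A}$ is monotone. Since $u^\epsilon \ge \Psi$ in $\Omega$ and $u^\epsilon = F \ge \Psi$ in $\Gamma$, we have $u^\epsilon \ge \Psi$ throughout $\Omega \cup \Gamma$; hence for any $x$ the pointwise inequality on all of $B_\epsilon(x)$ forces $\sup_{B_\epsilon(x)} u^\epsilon \ge \sup_{B_\epsilon(x)} \Psi$ and $\inf_{B_\epsilon(x)} u^\epsilon \ge \inf_{B_\epsilon(x)} \Psi$, so that $\mathcal{A}u^\epsilon(x) \ge \mathcal{A}\Psi(x)$.

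With monotonicity in hand I would split into the two cases dictated by the complementarity in Theorem \ref{existencia.unicidad.teo.intro}. If $u^\epsilon(x) > \Psi(x)$, the defining equality gives $u^\epsilon(x) - \mathcal{A}u^\epsilon(x) = 0$, and since the right-hand side of the claim is a positive part, hence nonnegative, the upper bound holds trivially. If instead $u^\epsilon(x) = \Psi(x)$, I substitute this equality into the left-hand side and apply monotonicity:
$$
u^\epsilon(x) - \mathcal{A}u^\epsilon(x) = \Psi(x) - \mathcal{A}u^\epsilon(x) \le \Psi(x) - \mathcal{A}\Psi(x).
$$
The supersolution property simultaneously guarantees that this quantity is $\ge 0$, so it coincides with its own positive part, and the bound $\big[\Psi(x) - \mathcal{A}\Psi(x)\big]_+$ is obtained.

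There is no genuine obstacle in this argument; the only points requiring care are that the monotonicity of both $\sup$ and $\inf$ under a pointwise inequality is precisely what makes $\mathcal{A}$ monotone, and that the contact set $\{u^\epsilon = \Psi\}$ is exactly where the equality in Theorem \ref{existencia.unicidad.teo.intro} may fail — which is the case I dispatch by substituting $u^\epsilon(x) = \Psi(x)$. The one verification worth making explicit is that the extension $u^\epsilon \ge \Psi$ across $\Gamma$ (using $\Psi \le F$) legitimizes the comparison over the whole ball $B_\epsilon(x)$, even when $B_\epsilon(x)$ protrudes outside $\Omega$.
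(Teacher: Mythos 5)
Your proof is correct and follows essentially the same route as the paper: the lower bound comes straight from the DPP/supersolution property, and the upper bound is the same case split on the contact set, with your monotonicity of the averaging operator being precisely the chain of inequalities the paper isolates as its Lemma~\ref{inequality}. If anything, you are slightly more careful than the paper in noting that $u^\epsilon = F \geq \Psi$ on $\Gamma$ is what justifies comparing $\sup$ and $\inf$ over all of $B_\epsilon(x)$ when the ball protrudes outside $\Omega$.
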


Here we use the notation $[A(x)]_+=\max\{A(x), 0 \}$.

Finally, if the obstacle $\Psi$ is   Lipschitz, then
the value function is Lipschitz with respect to the discrete distance
$d_\epsilon(x,y) = \epsilon [\frac{|x-y|}{\epsilon}+1]$ (by $[\cdot]$ we
denote the integer part).
\begin{lemma} \label{d.lip}
If the obstacle $\Psi$ Lipschitz, then
there exists a constant $C$, independent of $\epsilon$, such that
the value function $u^\epsilon$ satisfies
$$
|u^\epsilon (x) - u^\epsilon (y) | \leq C d_\epsilon (x, y).
$$
\end{lemma}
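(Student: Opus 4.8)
The plan is to establish the one-sided bound $u^\epsilon(x)-u^\epsilon(z)\le C\,d_\epsilon(x,z)$ for all $x,z\in\Omega\cup\Gamma$, with $C=\max\{L_F,L_\Psi\}$ the common Lipschitz constant of the data; since $d_\epsilon$ is symmetric, interchanging $x$ and $z$ then yields the stated estimate. Because $d_\epsilon(x,z)\ge|x-z|$, both $F$ and $\Psi$ are automatically $C$-Lipschitz with respect to $d_\epsilon$, so that the only constant entering is $\epsilon$-independent. Fixing $z$, I set $v(x)=u^\epsilon(z)+C\,d_\epsilon(x,z)$ and aim for the discrete comparison $u^\epsilon\le v$, which I attack by studying a maximizer $x^{*}$ of $w:=u^\epsilon-v$ and ruling out $M:=\max w>0$.

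First I would record the elementary one-step behaviour of the barrier: a move of size $\epsilon$ changes $d_\epsilon(\cdot,z)$ by at most $\epsilon$, a step straight toward $z$ lowers it by exactly $\epsilon$ (as long as $|x-z|\ge\epsilon$), and once $|x-z|<\epsilon$ the centre $z$ itself lies in $B_\epsilon(x)$. Then I dispose of the easy positions of $x^{*}$. On the contact set $\{u^\epsilon=\Psi\}$ one has $w(x^{*})=\Psi(x^{*})-u^\epsilon(z)-C\,d_\epsilon(x^{*},z)\le \Psi(x^{*})-\Psi(z)-C\,d_\epsilon(x^{*},z)\le0$, using $u^\epsilon(z)\ge\Psi(z)$ and $C\ge L_\Psi$. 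On $\Gamma$ one gets $w\le0$ from $C\ge L_F$ (the mixed case of interior $z$ and $x^{*}\in\Gamma$ is handled symmetrically by doubling the variables, i.e.\ maximizing $u^\epsilon(x)-u^\epsilon(z)-C\,d_\epsilon(x,z)$ jointly, so that a boundary maximum in one variable is reduced to the interior analysis in the other). Hence a positive maximum must sit at an interior \emph{non-contact} point, where Theorem~\ref{existencia.unicidad.teo.intro} supplies the \emph{exact} equality $u^\epsilon(x^{*})=\tfrac12\sup_{B_\epsilon(x^{*})}u^\epsilon+\tfrac12\inf_{B_\epsilon(x^{*})}u^\epsilon$.

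The core of the argument is a propagation-plus-centre mechanism built on this equality. Away from $z$ (i.e.\ $|x^{*}-z|\ge\epsilon$), bounding the supremum by $w(s)\le M$ together with $d_\epsilon(s,z)\le d_\epsilon(x^{*},z)+\epsilon$, and bounding the infimum by the value at the point $p'$ one step \emph{toward} $z$ (where $d_\epsilon$ drops by $\epsilon$), one finds $w(p')\ge M$; thus the maximum migrates inward step by step, and since $\Omega$ is bounded it reaches a point $x'$ with $|x'-z|<\epsilon$ (if the march instead lands on the contact set or on $\Gamma$ we are already done). At such $x'$ we have $d_\epsilon(x',z)=\epsilon$ and $z\in B_\epsilon(x')$, so $\inf_{B_\epsilon(x')}u^\epsilon\le u^\epsilon(z)$; choosing $s$ attaining the supremum we have $|s-z|<2\epsilon$, hence $d_\epsilon(s,z)\le2\epsilon$ and $u^\epsilon(s)\le M+u^\epsilon(z)+2C\epsilon$. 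The equality then gives $M=u^\epsilon(x')-u^\epsilon(z)-C\epsilon\le\tfrac12 u^\epsilon(s)-\tfrac12 u^\epsilon(z)-C\epsilon\le\tfrac12(M+2C\epsilon)-C\epsilon=\tfrac M2$, forcing $M\le0$, a contradiction.

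I expect the main obstacle to be exactly this central ball, where $d_\epsilon(\cdot,z)$ fails to satisfy the averaging identity and is in fact a strict subsolution: the inward march only pays off once the token is within one step of $z$, and the decisive collapse $M\le M/2$ rests on the bound $d_\epsilon(s,z)\le2\epsilon$ available only there. Secondary technical points are the correct bookkeeping of the free boundary $\{u^\epsilon=\Psi\}$ and the verification that the marching points stay in $\Omega\cup\Gamma$, which uses that $\Gamma$ is a full neighbourhood of $\partial\Omega$, so that if the march leaves $\Omega$ it lands where $w\le0$. (Alternatively, the Lewy--Stampacchia bound of Lemma~\ref{lewy.stamp} controls the non-harmonicity defect of $u^\epsilon$ by $O(\epsilon)$ and could replace the strict contact/non-contact dichotomy, but the dichotomy above already suffices.) The pleasant feature is that no step introduces $\epsilon$-dependence beyond the explicit powers of $\epsilon$ carried by $d_\epsilon$ itself, so the final constant is $C=\max\{L_F,L_\Psi\}$, uniform in $\epsilon$, precisely as needed for the later passage to the limit.
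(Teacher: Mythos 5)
Your barrier-and-marching scheme is genuinely different from the paper's proof, which disposes of Lemma \ref{d.lip} in two lines: by Remark \ref{nota7} and the proof of Theorem \ref{existencia.unicidad.teo.intro}, $u^\epsilon$ coincides with the value of an \emph{ordinary} tug-of-war game with augmented boundary $\Gamma\cup A^{u^\epsilon}$ and data built from $F$ and $\Psi$, so the Uniform Lipschitz Lemma 3.5 of \cite{PSSW} applies with a constant depending only on the Lipschitz constants of $F$ and $\Psi$. Your interior mechanism (propagation of the maximum of $w=u^\epsilon-v$ one $\epsilon$-step toward $z$ via the exact DPP equality, and the vertex collapse $M\le M/2$ once $|x'-z|<\epsilon$) is correct as far as it goes, and your treatment of the contact set on the $x$-side is right. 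But there is a genuine gap exactly at the case you dismiss in a parenthetical: $x^*\in\Gamma$ with $z$ interior. Your claim that ``on $\Gamma$ one gets $w\le 0$ from $C\ge L_F$'' is false when $z\in\Omega$, since then $w(x^*)=F(x^*)-u^\epsilon(z)-C\,d_\epsilon(x^*,z)$ compares an $F$-value with a game value, not with another $F$-value. And the proposed repair --- doubling variables so that ``a boundary maximum in one variable is reduced to the interior analysis in the other'' --- does not go through as stated: your interior analysis uses the \emph{exact} equality $u^\epsilon=\tfrac12\sup_{B_\epsilon}u^\epsilon+\tfrac12\inf_{B_\epsilon}u^\epsilon$, which fails on the contact set, and in the doubled problem nothing prevents the $z$-variable from sitting on (or marching into) $A^{u^\epsilon}$. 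The residual data-versus-data case $x^*\in\Gamma$, $z^*\in A^{u^\epsilon}$ is precisely the one \emph{not} controlled by Lipschitz constants: $F(x^*)-\Psi(z^*)=[F(x^*)-\Psi(x^*)]+[\Psi(x^*)-\Psi(z^*)]$, and $F-\Psi\ge 0$ on $\Gamma$ can be arbitrarily large. So, as written, the proof breaks exactly where the obstacle interacts with the boundary --- the one place where this lemma differs from the obstacle-free case.

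The gap is repairable, but it needs an observation you did not record: propagation of the maximum of $\Phi(x,z)=u^\epsilon(x)-u^\epsilon(z)-C\,d_\epsilon(x,z)$ in the $z$-variable requires only the \emph{supersolution} inequality $u^\epsilon(z^*)\ge\tfrac12\sup_{B_\epsilon(z^*)}u^\epsilon+\tfrac12\inf_{B_\epsilon(z^*)}u^\epsilon$, which by the DPP holds at \emph{every} interior point, contact or not. Indeed, bounding $\sup_{B_\epsilon(z^*)}u^\epsilon$ from below by $u^\epsilon(q)$ at the point $q$ one step toward $x^*$, and $\inf_{B_\epsilon(z^*)}u^\epsilon$ from below by near-maximality of the pair (every $s\in B_\epsilon(z^*)$ satisfies $u^\epsilon(s)\ge u^\epsilon(x^*)-M-C(d_\epsilon(x^*,z^*)+\epsilon)$), one gets $\Phi(x^*,q)\ge M-O(\eta)$; so $z$ marches through the contact set toward $x^*$ until it lands in $\Gamma$ (then $\Phi\le 0$ by $F$-Lipschitzness) or reaches $|x^*-z|<\epsilon$, where your collapse applies with the roles reversed. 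Two secondary points also need care: (i) $u^\epsilon$ is not known to be continuous a priori, so maxima need not be attained; you must run the march with $\eta$-near-maximizers, and since your step inequality takes the form $w(p')\ge 2w(x^*)-M$ the error doubles at each step --- harmless only because the number of steps is at most $\mathrm{diam}(\Omega)/\epsilon+1$ for each fixed $\epsilon$ and $\eta$ is arbitrary, but this should be said; (ii) with moves in the \emph{open} ball, the inward step decreases $d_\epsilon$ by exactly $\epsilon$ under the definition $d_\epsilon(x,y)=\epsilon\left(\left[|x-y|/\epsilon\right]+1\right)$ of the statement, but fails at integer radii for the ceiling variant used in Section \ref{sec-limit}, so fix one definition. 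Your remark that single steps never leave $\Omega\cup\Gamma$ (because $\epsilon\le\gamma$) is correct, but landing in $\Gamma$ leaves you with $w>0$ there, i.e.\ in the mixed case above, not with $w\le 0$.
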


Concerning the limit as $\epsilon \to 0$ of these value functions
we have the following result:

\begin{theorem} \label{convergencia.teo.intro}
We have that, as $\epsilon \to 0$,
$$
u^\epsilon \to u
$$
uniformly. The limit $u$ is the unique viscosity solution to the obstacle
problem for the infinity Laplacian, that is, it is the unique super-infinity harmonic function;  i.e., functions that satisfy
\begin{equation*}
-\Delta_\infty u = - \langle D^2 u \frac{Du}{|Du|}, \frac{Du}{|Du|} \rangle \geq 0,
\end{equation*}
in the viscosity sense,  is above the obstacle $\Psi$ in $\Omega$, takes the boundary value, $F$ on $\Gamma$
and is infinity harmonic where it lies strictly above the obstacle.
\end{theorem}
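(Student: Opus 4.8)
The plan is to prove Theorem~\ref{convergencia.teo.intro} in three stages: first establish uniform convergence of a subsequence via a compactness argument, then identify the limit as a viscosity solution of the obstacle problem, and finally invoke uniqueness to conclude that the whole family converges. For the compactness step I would distinguish the Lipschitz and merely-continuous cases. When $\Psi$ is Lipschitz, Lemma~\ref{d.lip} gives an $\epsilon$-independent discrete Lipschitz bound $|u^\epsilon(x)-u^\epsilon(y)|\le C d_\epsilon(x,y)$; since $d_\epsilon(x,y)\to |x-y|$ as $\epsilon\to 0$, the family $\{u^\epsilon\}$ is asymptotically equi-Lipschitz and uniformly bounded (by $\|F\|_\infty$ and the obstacle constraint), so by an Arzel\`a--Ascoli type argument one extracts a subsequence converging uniformly to some $u\in C(\overline\Omega)$. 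For a general continuous obstacle one approximates $\Psi$ from below by Lipschitz obstacles and uses the comparison principle in Lemma~\ref{comp.princ} to control the resulting value functions.

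The heart of the proof is showing the uniform limit $u$ is a viscosity solution of the obstacle problem, and this is the step I expect to be the main obstacle. The strategy is the standard viscosity-solution argument adapted from \cite{PSSW} and \cite{MPR}: I would take a smooth test function $\varphi$ touching $u$ from below at an interior point $x_0$ where $u(x_0)>\Psi(x_0)$, and show $-\Delta_\infty\varphi(x_0)\ge 0$. Using the Lewy--Stampacchia inequality of Lemma~\ref{lewy.stamp}, note that where $u$ lies strictly above the obstacle the right-hand side $[\Psi-\frac12(\sup_{B_\epsilon}\Psi+\inf_{B_\epsilon}\Psi)]_+$ vanishes in the limit, so $u^\epsilon$ asymptotically satisfies the \emph{equality} (the dynamic programming principle) rather than the inequality. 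Plugging $\varphi$ into this asymptotic mean-value identity, the key Taylor-expansion estimate is that for a smooth $\varphi$ with $D\varphi(x_0)\neq 0$,
\begin{equation*}
\frac{1}{2}\left(\sup_{B_\epsilon(x_0)}\varphi+\inf_{B_\epsilon(x_0)}\varphi\right)-\varphi(x_0)=\frac{\epsilon^2}{2}\Delta_\infty\varphi(x_0)+o(\epsilon^2),
\end{equation*}
which after dividing by $\epsilon^2$ and passing to the limit yields $\Delta_\infty\varphi(x_0)\le 0$, i.e.\ the desired viscosity inequality. The constraint $u\ge\Psi$ in $\Omega$ and the boundary condition $u=F$ on $\Gamma$ pass to the limit directly from the corresponding properties of $u^\epsilon$ in Theorem~\ref{existencia.unicidad.teo.intro}. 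The delicate points here are handling the case $D\varphi(x_0)=0$ (where the standard convention makes $\Delta_\infty$ ambiguous and one must argue with the semicontinuous envelopes) and justifying that the supremum and infimum over $B_\epsilon(x_0)$ are attained close enough to $x_0$ to make the Taylor expansion uniform.

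Finally, to upgrade subsequential convergence to convergence of the full family and to establish that the limit is the \emph{unique} viscosity solution, I would invoke a comparison principle for the continuous obstacle problem for $\Delta_\infty$ (of the type available for infinity-harmonic functions, cf.\ \cite{PSSW}): any two viscosity solutions with the same boundary data $F$ and obstacle $\Psi$ must coincide. Since every convergent subsequence of $\{u^\epsilon\}$ produces a limit that is a viscosity solution with the same data, uniqueness forces all subsequential limits to agree, and hence $u^\epsilon\to u$ uniformly along the whole family. I would present the comparison/uniqueness statement as a separate proposition and reduce it to the known theory, since the obstacle only alters the argument in the contact set $\{u=\Psi\}$, where $u$ is trapped between the obstacle and the supersolution property and the inequality is automatic.
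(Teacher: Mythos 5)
Your overall architecture (compactness, identification of the limit via test functions, uniqueness by comparison) is sound, but one step as written would fail: the use of Lemma~\ref{lewy.stamp} to conclude that $u^\epsilon$ satisfies the mean-value \emph{equality} up to errors negligible at scale $\epsilon^2$. The right-hand side of the Lewy--Stampacchia inequality, $\bigl[\Psi(x)-\frac{1}{2}\bigl(\sup_{B_\epsilon(x)}\Psi+\inf_{B_\epsilon(x)}\Psi\bigr)\bigr]_+$, depends on $\Psi$ alone: it does not become small at points where $u>\Psi$, and for a merely Lipschitz obstacle it is in general of size $\Theta(\epsilon)$ (at a kink of $\Psi$, e.g.\ a cone vertex, it is comparable to $L\epsilon/2$). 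Since your viscosity argument divides by $\epsilon^2$, an $O(\epsilon)$ defect is fatal to the subsolution direction; the defect is $O(\epsilon^2)$ only when $\Psi\in C^2$, which is exactly why the paper's remark following Theorem~\ref{lema.conver} assumes $C^2$ there. The repair is immediate and stays within your toolkit: the DPP gives $u^\epsilon=\max\{\Psi,\frac{1}{2}(\sup_{B_\epsilon}u^\epsilon+\inf_{B_\epsilon}u^\epsilon)\}$ in $\Omega$, so wherever $u^\epsilon>\Psi$ the equality is \emph{exact}; and if $u\geq\Psi+\eta$ on $\overline{B_r(x_0)}$, uniform convergence gives $u^\epsilon>\Psi$ there for all small $\epsilon$, hence exact discrete harmonicity on $B_r(x_0)$, with no appeal to Lemma~\ref{lewy.stamp}. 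With that substitution, plus the standard transfer of the discrete (in)equalities to test functions at approximate extremal points and the $D\varphi(x_0)=0$ bookkeeping you already flag (both as in \cite{PSSW} and \cite{MPR3}), your identification step goes through. Note also that the supersolution inequality $u^\epsilon\geq\frac{1}{2}(\sup+\inf)$ is exact everywhere in $\Omega$, so the supersolution property of $u$ on the contact set follows from the same test-function transfer; ``automatic'' is too quick as stated, but the argument is one line.

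Granting that fix, your route is genuinely different from the paper's at both main steps, and in one respect stronger. For harmonicity on $\Omega\setminus A^u$ the paper avoids test functions altogether: it introduces the discrete $\epsilon_j$-harmonic functions $v^{\epsilon_j}$ on a ball $B_r(x_0)\subset\Omega\setminus A^u$ with boundary data $u$, sandwiches $v^{\epsilon_j}-\delta<u^{\epsilon_j}<v^{\epsilon_j}+\delta$ via the discrete comparison principle, and quotes \cite{PSSW} and \cite{MPR3} for $v^{\epsilon_j}\to v$ infinity harmonic, concluding $u=v$. Your PDE-style Taylor-expansion argument is more self-contained and, importantly, does not need smoothness of the obstacle: the paper's treatment of the contact set explicitly invokes $\Psi\in C^2$ (an assumption absent from the statement of Theorem~\ref{convergencia.teo.intro}), whereas the discrete supersolution inequality passes to the viscosity limit for any continuous $\Psi$. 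For uniqueness, the paper identifies the limit with the \emph{least} infinity superharmonic function above the data, using $A^u\subset A^{u_\infty}$ and comparison on $\Omega\setminus A^u$ with boundary values on $\Gamma\cup A^u$; your plan --- compare two solutions on the open set $\{u_1>u_2\}$, where $u_1>u_2\geq\Psi$ forces $u_1$ to be infinity harmonic while $u_2$ is a supersolution and $u_1=u_2$ on the boundary of that set --- reduces to the same comparison principle for $\Delta_\infty$ and is equally valid, but you should make that reduction explicit rather than deferring it to ``known theory.'' Finally, the Lipschitz-approximation detour for merely continuous obstacles is unnecessary here, since the theorem assumes $F$ and $\Psi$ Lipschitz.
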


There is a close connection between $p-$harmonic functions and infinity harmonic functions. Indeed, if we pass to the limit, as $p\to \infty$, in a sequence $(u_p)$ of $p-$harmonic functions (in the viscosity sense, see \cite{CIL}), that is, solutions of $\Delta_p u_p =0$,
with given boundary values, the limit exists (in the uniform topology) and is a solution of the infinity Laplace equation (see \cite{BBM})
$$-\Delta'_\infty u =-\langle D^2 u Du, Du \rangle =- \sum_{i,j=1}^{d} \frac{\partial u}{\partial x_i} \frac{\partial u}{\partial x_j} \frac{\partial^2 u}{\partial x_i \partial x_j}=0.$$
The infinity Laplacian is connected with the optimal Lipschitz extension problem \cite{ACJ}, and arises also in the context of mass transportation problems and several other applications, such as image reconstruction and enhancement
\cite{cms}. Note that here we have normalized the operator and consider $
-\Delta_\infty u = - \langle D^2 u \frac{Du}{|Du|}, \frac{Du}{|Du|} \rangle$. Both equations turns out to be equivalent, in the sense that they have the same viscosity solutions, see \cite{PSSW}.

On the other hand,
the obstacle problem for elliptic operators has been extensively studied. In the classical approach one seeks to minimize the energy
$E(u) = \int_\Omega |D u|^2$
among the functions that coincide with a given function $F$ at the boundary of $\Omega \subset \R^d$ and remain above a prescribed obstacle $\Psi$. Such a problem is motivated by the description of the equilibrium position of a membrane (the graph of the solution) attached at level $F$ along the boundary of $\Omega$ and that is forced to remain above the obstacle in the interior of $\Omega$.
Many of the results obtained for the Laplacian were generalized for the $p-$Laplacian whose energy functional is given by
$E(u) = \int_\Omega |D u|^p$.

However, the infinity Laplacian is not variational (hence no energy methods are directly available). One may rely on methods from potential theory (Perron method),
on limit procedures like the ones described here,  or one can take the limit as $p\to \infty$ in the obstacle problem for the $p-$Laplacian and obtain a solution to the obstacle problem for the infinity Laplacian, see \cite{RTU} for example.

\medskip

The paper is organized as follows: in Section \ref{sec-Descrip} we describe the Tug-of-War game;
in Section \ref{sec-Proper} we collect some properties of the value function of the game; finally, in Section~\ref{sec-limit} we deal with the limit as $\eps \to 0$ and find a proof of existence of a solution to the obstacle problem for the infinity Laplacian based on Tug-of-War games. We also discuss the convergence of the contact sets. (The sets where the value function equals the obstacle).
\end{section}

\begin{section}{Description of the game} \label{sec-Descrip}
\setcounter{equation}{0}


\subsection{Description of the game} \label{subsect-descrip.game}
The game that we describe
below is called a leavable game. Some leavable games are described in
\cite{MS} Chapter 7. Tug-of-War is developed in \cite{PSSW}.

Tug-of-War is a two-person, zero-sum game, in other words, two players
are in contest and the total earnings of one are the losses of the
other. Hence, one of them, whom we call Player I, plays trying to maximize
his expected outcome, while the other, Player II, is trying to
minimize Player I's outcome (or, since the game is zero-sum, to
maximize his own outcome). In this Tug-of-War leavable game
Player I can decide to end the game before the boundary is reached
i.e. his strategy includes stopping rule.

Now, let us describe the game more precisely. Let
$\Omega
\subset
\RR^N$ be a bounded smooth domain. For a fixed $\gamma >0$,
consider a strip around the boundary $\Gamma\subset\RR^N
\setminus\Omega$ given by
$$
\Gamma = \left\{ x \in \RR^N \setminus \Omega \ : \ \mbox{dist} (x ,\partial \Omega)
\leq \gamma \right\}.
$$
Let $F:\Gamma\rightarrow\R$ be a Lipschitz continuous function
(the final payoff). In addition we have a function $\Psi: \RR^N
\mapsto \RR$ (the obstacle) such that
$$
\Psi \leq F \qquad \mbox{in } \Gamma.
$$
Note that any neighborhood of $\partial \Omega$ in $\RR^N \setminus \Omega$ contains a strip of this form.

The rules of the game are as follows:
 At
an initial time a token is placed at a point $x_0\in\Omega $ and
we fix $\epsilon \in (0, \gamma]$. Then, a (fair) coin is tossed and the
winner of the toss is allowed to move the game position to any
$x_1\in
{B}_\epsilon(x_0)$. At each turn, the coin is tossed again, and the
winner of the toss chooses a new game state $x_k\in
{B}_\epsilon(x_{k-1})$. Once the token has reached some
$x_\tau\in\Gamma$, the game ends and Player I earns $F(x_\tau)$
(while Player II earns $-F(x_\tau)$). This is the reason why we
will refer to $F$ as the
\emph{final payoff function}. In addition, at every position $x_n$, Player I
is allowed choose to end the game earning $\Psi (x_n)$
(while Player II earns $-\Psi(x_n)$). We will call $\Psi$ the
\emph{obstacle function}. This procedure
yields a sequence of game states $x_0,x_1,x_2,\ldots, x_\tau$,
where every $x_k$ except $x_0$ are random variables, depending on
the coin tosses, the strategies (defined below) adopted by the players and the
stopping rule chosen by Player I.

Note that the relevant values of $F$ are those taken in the set
$$
\Gamma_\epsilon = \bigcup_{x\in {\Omega}}
{B}_\epsilon(x) \cap \Gamma
$$
since those are the the points at which the game could end.

Next, we give a precise definition of the {\it value of the game}.
To this end we have to introduce some notation and put the game
into its normal or strategic form (see \cite{PS}). The initial state
$x_0\in \Omega$ is known to both players (public knowledge). Each
player $i$ chooses an \emph{action} $a_{0}^i\in
{B}_\epsilon(x_0)$ which is announced to the other player;
this defines an action profile $a_0=\{a_{0}^1,a_{0}^2\}\in
{B}_\epsilon(x_0)
\times {B}_\epsilon(x_0)$. Then, the
new state $x_1\in {B}_\epsilon(x_0)$ is selected
according to a probability distribution $p(\cdot|x_0,a_0)$ in
$\Omega$ which, in our case, is given by the fair coin toss. In
addition, Player I, chooses a stop rule $\tau$ (here $\tau$ takes
values in $\NN$, and determines that the game ends at step $\tau$). At stage $k$, knowing the history
$h_k=(x_0,a_0,x_1,a_1,\ldots,a_{k-1},x_k)$, (the sequence of
states and actions up to that stage), Player I chooses to end the
game or to continue according to the stoping rule $\tau$ (that is
a mapping from histories $h_k$ to $\NN$, if $\tau \neq k$ the game
continues, while, if $\tau = k$ the game ends), if she decides to
continue, each player $i$ chooses an action $a_{k}^i$. If the game
ends at time $j $ (the game ends if the position $x_j$ belongs to
$\Gamma$ or if the stopping rule for the first player applies), we set
$x_m = x_j$ and $a_m=x_j$ for $j\leq m$.

Denote $H_k=(\Omega\cup\Gamma)^k=\big(
(\Omega\cup\Gamma)\times
(\Omega\cup\Gamma)\times \ldots \times(\Omega\cup\Gamma)\big)$, the set of
\emph{histories up to stage $k$}, and by $H_\infty
=\bigcup_{k\geq1}H_k$ the set of all histories. Notice that $H_k$,
as a product space, has a measurable structure. The \emph{complete
history space} $H_\infty$ is the set of plays defined as infinite
sequences $(x_0,a_0,\ldots,a_{k-1},x_k,\ldots)$ endowed with the
product topology. Then, the final payoff for Player I, defined by
$$\tilde{F} (x) = \left\{
\begin{array}{ll}
F(x), \qquad x \in \Gamma, \\
\Psi (x), \qquad x \in \Omega,
\end{array}\right.
$$
induces a Borel-measurable function on $H_\infty$. A
\emph{strategy} ${S}_i=\{S_{i}^k\}_k$ for Player $i$,  is a
sequence of mappings from histories to actions, such that
${S}_{i}^{k}$ is a Borel-measurable mapping that maps histories
ending with $x_k$ to elements of $ {B}_\epsilon(x_k)$ (roughly
speaking, at every stage the strategy gives the next movement for
the player, provided he win the coin toss, as a function of the
current state and the past history). A
\emph{stopping rule} for Player I is a stopping time, $\tau$, from histories $H$ to
$\NN$ that is a finite everywhere, Borel-measurable, such that for
every $k$ the set $\{\tau=k\}$ belongs to the sigma field
generated by the coordinate functions $X_1,...,X_k$ of $H$ (if
$\tau\neq k$ the game continues, while if $\tau=k$ the game ends).

The initial state $x_0$, a stopping rule $\tau$ and a profile of
strategies $\{S_I,S_{II}\}$ define (by Kolmogorov's extension
theorem) a unique probability $\mathbb{P}_{\tau,S_I,S_{II}}^{x_0}$
on the space of plays $H_\infty$. We denote by
$\mathbb{E}_{\tau,S_I,S_{II}}^{x_0}$ the corresponding
expectation.

Then, if $\tau$ denotes the stopping rule for Player I and $S_I$
and $S_{II}$ denote the strategies adopted by Player I and
Player~II respectively, we define the expected payoff for Player I
as
\[
V_{x_0,I}(\tau,S_I,S_{II})=
\left\{
\begin{array}{ll}
\displaystyle \mathbb{E}_{\tau,S_I,S_{II}}^{x_0}[\tilde{F}(x_\tau)],\quad &
\text{if the game terminates a.s.}\\
\displaystyle -\infty,\quad & \text{otherwise.}
\end{array}
\right.
\]
Analogously, we define the expected payout for Player II as
\[
V_{x_0,II}(t,S_I,S_{II})=
\left\{
\begin{array}{ll}
\displaystyle \mathbb{E}_{\tau,
S_I,S_{II}}^{x_0}[\tilde{F}(x_\tau)],\quad & \text{if the game terminates a.s.}\\
\displaystyle +\infty,\quad & \text{otherwise.}
\end{array}
\right.
\]
Finally, we can define the $\epsilon$-value of the game for Player
I as
\[
u_{I}^\epsilon(x_0)=\sup_{\tau,S_I}\inf_{S_{II}}\,V_{x_0,I}(\tau,S_I,S_{II}),
\]
while the $\epsilon$-value of the game for Player II is defined as
\[
u_{II}^\epsilon(x_0)=\inf_{S_{II}}\sup_{\tau,S_I}\,V_{x_0,II}(\tau,S_I,S_{II}).
\]
In some sense, $u_I^\epsilon(x_0)$, $u_{II}^\epsilon(x_0)$ are the
least possible outcomes that each player expects when the
$\epsilon$-game starts at $x_0$. Notice that, as in \cite{PSSW},
we penalize severely the games that never end.

In \cite{MS} it is shown that, under very general hypotheses that
are fulfilled in the present setting, $u_{I}^\epsilon=
u_{II}^\epsilon :=u^\epsilon$. The function $u^\epsilon$ is called
the value of the $\epsilon$-Tug-of-War game.


\subsection{Dynamic Programming Principle} \label{subsect-Properties.game}


For $x\in \Omega$, looking at the stopping strategy and the outcome of the first coin toss, we immediately
get the following lemma, that says that the values of the game satisfy a Dynamic Programming Principle (DPP) formula (see also \cite{MS} for similar Dynamic Programming Principles).

\begin{lemma}
{\rm (DPP)}
The value functions $u^\epsilon_I$ and $u^\epsilon_{II}$ satisfy
$$u^\epsilon(x)=\max \left\{ \Psi(x), \frac{1}{2}  \sup_{y\in {B}_\epsilon (x) } u^\epsilon (y)+
\frac{1}{2} \inf_{y\in {B}_\epsilon (x) } u^\epsilon (y) \right\} \qquad \forall x \in \Omega,$$
and
$$
u^\epsilon(x)=F(x),\qquad \forall x \in \Gamma.
$$
\end{lemma}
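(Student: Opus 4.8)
The plan is to establish the Dynamic Programming Principle by proving two inequalities, $u^\epsilon_I(x) \geq \max\{\cdots\}$ and $u^\epsilon_{II}(x) \leq \max\{\cdots\}$, and then invoking the equality $u^\epsilon_I = u^\epsilon_{II} = u^\epsilon$ guaranteed by the reference to \cite{MS}. The key observation driving everything is that the game is memoryless after one step: conditioning on the outcome of the first coin toss and on whether Player~I elects to stop, the continuation is just a fresh copy of the same $\epsilon$-game started at the new position.

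First I would prove the lower bound for $u^\epsilon_I$. Fix $x \in \Omega$ and a small $\eta > 0$. Player~I has two options encoded in the supremum: stop immediately, collecting $\Psi(x)$, which shows $u^\epsilon_I(x) \geq \Psi(x)$; or continue, in which case I would have Player~I commit to moving (if he wins the toss) to a point $y_1$ that is within $\eta$ of realizing $\sup_{y \in B_\epsilon(x)} u^\epsilon_I(y)$, and thereafter play an $\eta$-optimal strategy from whatever position results. Since the coin is fair, conditioning on the first toss gives an expected payoff of at least
\begin{equation*}
\frac{1}{2}\Big(\sup_{y\in B_\epsilon(x)} u^\epsilon_I(y) - \eta\Big) + \frac{1}{2}\Big(\inf_{y\in B_\epsilon(x)} u^\epsilon_I(y) - \eta\Big),
\end{equation*}
where the infimum term accounts for Player~II moving to a near-worst point when she wins. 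Letting $\eta \to 0$ yields $u^\epsilon_I(x) \geq \tfrac12 \sup_{B_\epsilon(x)} u^\epsilon_I + \tfrac12 \inf_{B_\epsilon(x)} u^\epsilon_I$, and combining with $u^\epsilon_I(x) \geq \Psi(x)$ gives the $\geq$ direction.

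Next I would prove the upper bound for $u^\epsilon_{II}$ symmetrically. Here Player~II is minimizing, so I would have her adopt a near-optimal strategy that, upon winning the first toss, moves to a point nearly achieving $\inf_{y\in B_\epsilon(x)} u^\epsilon_{II}(y)$, and then continues $\eta$-optimally. The subtlety is that Player~I controls the stopping rule, so against any Player~II strategy, Player~I either stops (earning at most $\Psi(x)$) or continues; in the continuation the best Player~I can extract, after the first toss, is bounded above by $\tfrac12 \sup_{B_\epsilon(x)} u^\epsilon_{II} + \tfrac12 \inf_{B_\epsilon(x)} u^\epsilon_{II}$ up to $\eta$. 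Taking the maximum over these two choices and sending $\eta \to 0$ delivers $u^\epsilon_{II}(x) \leq \max\{\cdots\}$. The boundary condition $u^\epsilon(x) = F(x)$ for $x \in \Gamma$ is immediate since the game has already terminated and $\tilde F(x) = F(x)$ there.

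The main obstacle is the careful bookkeeping of the stopping rule inside the expectation: unlike the ordinary Tug-of-War DPP, the $\max$ with $\Psi(x)$ arises precisely because Player~I's strategy space includes the decision to terminate at the current stage, and one must verify that splicing an $\eta$-optimal continuation strategy onto the one-step move produces a legitimate (Borel-measurable, a.s.-terminating) strategy-and-stopping-rule pair so that the expected payoffs above are well-defined. I would handle this by appealing to the measurable-selection and strategy-concatenation machinery underlying the general value-existence result in \cite{MS}, rather than reconstructing it here; the heuristic ``look at the first coin toss and the stopping option'' already pins down the formula, and the rigorous justification is exactly the standard one-step analysis of a leavable game.
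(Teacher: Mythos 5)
Your proposal is correct and follows essentially the same route as the paper, which proves the lemma only by the remark that ``looking at the stopping strategy and the outcome of the first coin toss'' immediately yields the formula, deferring the underlying measurability and value-existence machinery to \cite{MS}. Your two-sided argument (lower bound for $u^\epsilon_I$ via stop-or-move-to-near-sup, upper bound for $u^\epsilon_{II}$ via near-inf responses, then $u^\epsilon_I=u^\epsilon_{II}$ from \cite{MS}) is exactly the standard one-step analysis the paper is invoking, spelled out in more detail than the paper itself provides.
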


This immediately implies that $u^\epsilon_I$ and $u^\epsilon_{II}$ satisfy
the following formulation of the discrete obstacle problem:
$$
\left\{
\begin{array}{ll}
u^\epsilon (x) = F (x),  & \mbox{in } \Gamma, \\[6pt]
u^\epsilon (x) \geq \Psi (x),  & \mbox{ in } \Omega, \\[6pt]
\displaystyle  u^\epsilon (x) \geq \frac{1}{2} \left( \sup_{y\in {B}_\epsilon (x) } u^\epsilon (y) +
\inf_{y\in {B}_\epsilon (x) } u^\epsilon (y)\right) ,  & \mbox{ in }
\Omega , \\[6pt]
\displaystyle  u^\epsilon (x) = \frac{1}{2} \left( \sup_{y\in {B}_\epsilon (x) } u^\epsilon(y) +
\inf_{y\in {B}_\epsilon (x) } u^\epsilon(y)\right) ,  & \mbox{ in }
\Omega \setminus A^{u^\epsilon} .
\end{array}
\right.
$$
Here $A^{u^\epsilon}$ is the coincidence set, that is, the set where $u^\epsilon =\Psi$ in $\Omega$.

We call the last equality discrete $\epsilon$ infinity harmonic. We call the last inequality discrete $\epsilon$ infinity super harmonic. Discrete $\epsilon$ infinity subharmonic is defined analogously.
\end{section}


\begin{section}{Properties of the game value functions} \label{sec-Proper}
\setcounter{equation}{0}


Here we prove the  existence of a value function for the game, Theorem \ref{existencia.unicidad.teo.intro},  and our comparison principle Lemma
\ref{comp.princ}.

To prove these we need some lemmas. We will show that $u_I^\epsilon$ is the smallest supersolutions that satisfies our conditions and $u_{II}^\epsilon$ is, in some sense, the largest subsolution.

\begin{remark} \label{nota7}{\rm Note that $u_I^\epsilon$ and $u_{II}^\epsilon$ are at least as large as the corresponding ordinary tug of war game with ``boundary" $Y= \Gamma \cup A^{u}$ where $A^u$ is the corresponding contact set i.e. $A^u=A^{u_I}$ or $A^u=A^{u_{II}}$. More precisely, let $A^u$ be the contact set of $u$
and let $Y=\Gamma \cup A^u$. Let $\hat{F}:Y\to\RR$ be the Lipschitz function
$$\hat{F} (x) = \left\{
\begin{array}{ll}
F(x), \qquad x \in  \Gamma, \\
\Psi (x), \qquad x \in  A^u .
\end{array}\right.
$$
Notice $\hat{F}$ is well defined because if $\Gamma \cap A^u$ is nonempty then $F=\Psi$ there. Then we have the inequality $ w^\epsilon \leq u$ where $w^\epsilon$ is the value function for the ordinary tug-of-war game in this setting.

We have $ w^\epsilon\leq u$ because Player I could always play as if he were in this ordinary tug-of-war situation so he can do at least as well in the obstacle game.
}
\end{remark}

\begin{lemma} \label{lema.supersol}
Let $v$ be a supersolution to the DPP, that is, a function that satisfies
$$
\left\{\begin{array}{ll}
v(x) \geq F (x),  & \mbox{in } \Gamma, \\[8pt]
v(x) \geq \Psi (x),  & \mbox{in } \Omega, \\[8pt]
\displaystyle v(x) \geq \frac{1}{2} \left( \sup_{{B}_\epsilon (x) } v(y) +
\inf_{{B}_\epsilon (x) } v(y)\right) ,  & \mbox{ in }
\Omega ,
\end{array}
\right.
$$
then we have
$$
u_I^\epsilon (x)\leq v(x).
$$
\end{lemma}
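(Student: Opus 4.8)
The plan is to show that $v$ dominates Player I's value by exhibiting, for an arbitrary Player-I stopping rule and strategy, a response for Player II that keeps the expected final payoff below $v(x_0)$. The key idea is that $v$, being a DPP-supersolution, plays the role of a supermartingale dominator for the game dynamics. First I would fix $\eta > 0$ and an arbitrary strategy-plus-stopping-rule $(\tau, S_I)$ for Player I, and then construct a specific strategy $S_{II}^0$ for Player II: whenever Player II wins the coin toss at position $x_{k-1}$, she moves to a point $x_k \in \overline{B}_\epsilon(x_{k-1})$ that \emph{nearly minimizes} $v$, i.e.\ with $v(x_k) \leq \inf_{B_\epsilon(x_{k-1})} v + \eta\, 2^{-k}$. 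The $2^{-k}$ weighting is the standard device ensuring the accumulated error stays bounded by $\eta$.

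The central step is to verify that the sequence $M_k = v(x_k) + \eta\, 2^{-k}$ is a supermartingale with respect to the filtration generated by the game history, under $\P_{\tau, S_I, S_{II}^0}^{x_0}$. Conditioned on the history up to stage $k-1$, the expected value of $v(x_k)$ is the average of Player I's choice (at most $\sup_{B_\epsilon(x_{k-1})} v$) and Player II's near-minimizing choice (at most $\inf_{B_\epsilon(x_{k-1})} v + \eta\, 2^{-k}$); the supersolution inequality $v(x_{k-1}) \geq \tfrac{1}{2}(\sup_{B_\epsilon(x_{k-1})} v + \inf_{B_\epsilon(x_{k-1})} v)$ then yields $\mathbb{E}[v(x_k)\mid h_{k-1}] \leq v(x_{k-1}) + \tfrac{\eta}{2}\,2^{-k}$, which after adding the correction term gives $\mathbb{E}[M_k \mid h_{k-1}] \leq M_{k-1}$. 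I would then apply the optional stopping theorem at the (finite, by hypothesis on $\tau$) stopping time to conclude $\mathbb{E}_{\tau,S_I,S_{II}^0}^{x_0}[v(x_\tau)] \leq v(x_0) + \eta$.

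It remains to connect $v(x_\tau)$ to the actual payoff $\tilde F(x_\tau)$. Here the first two inequalities in the hypothesis do the work: if the game ends on $\Gamma$ then $\tilde F(x_\tau) = F(x_\tau) \leq v(x_\tau)$, while if Player I exercises the stopping rule in $\Omega$ then $\tilde F(x_\tau) = \Psi(x_\tau) \leq v(x_\tau)$. In either case $V_{x_0,I}(\tau, S_I, S_{II}^0) = \mathbb{E}[\tilde F(x_\tau)] \leq \mathbb{E}[v(x_\tau)] \leq v(x_0) + \eta$. Taking the infimum over $S_{II}$ (which only lowers the value), then the supremum over $(\tau, S_I)$, and finally letting $\eta \to 0$ gives $u_I^\epsilon(x_0) \leq v(x_0)$.

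The main obstacle I anticipate is the measure-theoretic bookkeeping needed to make the supermartingale argument rigorous: ensuring the near-minimizing selection $S_{II}^0$ is Borel-measurable (a measurable-selection issue), and justifying optional stopping when one does not have an a priori uniform bound on $\tau$. The hypothesis that Player I's stopping rule is finite everywhere should suffice, but care is required to rule out the case where the game fails to terminate almost surely under $S_{II}^0$ — in which event $V_{x_0,I} = -\infty$ by definition and the inequality holds trivially, so this degenerate case actually helps rather than hurts.
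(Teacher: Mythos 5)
Your proposal is correct, but it follows a genuinely different route from the paper's. The paper argues by reduction: on the contact set $A^{u_I^\epsilon}$ the bound is immediate since $u_I^\epsilon=\Psi\leq v$ there, while off the contact set $u_I^\epsilon$ is discrete $\epsilon$ infinity harmonic, so it is identified with the value $w^\epsilon$ of the \emph{ordinary} (no-obstacle) Tug-of-War on $\Omega\setminus A^{u_I^\epsilon}$ with boundary data $\hat F$ equal to $F$ on $\Gamma$ and $\Psi$ on $A^{u_I^\epsilon}$ (Remark~\ref{nota7}), and the comparison/uniqueness theory of \cite{PSSW} for that game then gives $u_I^\epsilon=w^\epsilon\leq v$, using that $v$ is a discrete supersolution dominating $\hat F$ on $\Gamma\cup A^{u_I^\epsilon}$. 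You instead prove the comparison directly from the game: Player II nearly minimizes $v$ with geometrically summable errors, the supersolution inequality makes $M_k=v(x_k)+\eta\,2^{-k}$ a supermartingale (your computation is right: $\mathbb{E}[M_k\mid h_{k-1}]\leq v(x_{k-1})+\tfrac{3}{2}\eta\,2^{-k}\leq M_{k-1}$), and optional stopping together with $\tilde F\leq v$ at the terminal position yields $u_I^\epsilon(x_0)\leq v(x_0)+\eta$. Two small points worth making explicit in your write-up: optional stopping is justified without any upper bound on $v$ or on $\tau$ because $v$ is bounded below on the bounded set $\Omega\cup\Gamma$ (by $\Psi$, respectively $F$), so the Fatou form of optional stopping for supermartingales bounded below applies at the a.s.-finite termination time; and the near-minimizing selection can be made Borel measurable by the usual countable-covering argument, exactly as in \cite{PSSW}. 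What your approach buys is self-containedness: it re-derives the comparison from first principles rather than routing through the identification $u_I^\epsilon=w^\epsilon$ and the uniqueness of discrete infinity harmonic functions cited from \cite{PSSW} (a step with its own subtleties). What the paper's approach buys is brevity: the measurable-selection and martingale bookkeeping that you rightly flag as the residual technical work are absorbed into the cited results rather than handled by hand.
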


\begin{proof}
If $x_0 \in A^{u_I^\epsilon}$ then $u_I^\epsilon(x_0)=\Psi (x_0) \leq v(x_0)$. So we assume $x_0 \notin A^{u_I^\epsilon}$. In $\Omega \setminus A^{u_I^\epsilon}$ we have
$$u_I^\epsilon(x)=\frac{1}{2} \sup_{{B}_\epsilon (x) } u_I^\epsilon(y) + \frac{1}{2}
\inf_{{B}_\epsilon (x) } u_I^\epsilon(y) > \Psi(x).$$
Let $w^\epsilon$ be the tug-of-war game, without obstacle, described in remark \ref{nota7} (with $A^u=A^{u_I^\epsilon}$).
Thus, since $u_I^\epsilon$ is $\epsilon$ discrete infinity harmonic in $\Omega \setminus A^{u_I^\epsilon}$, and $w^\epsilon$ is the unique such function by \cite{PSSW}, we have $u_I^\epsilon=w^\epsilon \leq v$.
\end{proof}

\begin{lemma} \label{lema.subsol}
Let $v$ be a subsolution away from the obstacle which also lies above the obstacle. That is, a function that satisfies
$$
\left\{
\begin{array}{ll}
v(x) \leq F (x),  & \mbox{in } \partial \Omega, \\[8pt]
v(x) \geq \Psi(x),  & \mbox{in } \Omega, \\[8pt]
\displaystyle v(x) \leq \frac{1}{2} \left( \sup_{{B}_\epsilon (x) } v(y) +
\inf_{{B}_\epsilon (x) } v(y)\right) ,  & \mbox{ in }
\Omega \setminus A^v ,
\end{array}
\right.
$$
then we have
$$
v(x) \leq u_{II}^\epsilon (x).
$$
\end{lemma}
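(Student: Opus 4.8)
The plan is to dualize the argument for Lemma~\ref{lema.supersol}: I will show that Player~I can guarantee an expected payoff arbitrarily close to $v(x_0)$ no matter how Player~II plays. Since $u_{II}^\epsilon(x_0)=\inf_{S_{II}}\sup_{\tau,S_I}V_{x_0,II}(\tau,S_I,S_{II})$, this yields $v(x_0)\le u_{II}^\epsilon(x_0)$. To set it up I would fix $\eta>0$ and an \emph{arbitrary} strategy $S_{II}$ for Player~II, and let Player~I use the strategy $S_I$ that, upon winning the toss at a state $x_k$, moves to some $x_{k+1}\in B_\epsilon(x_k)$ with $v(x_{k+1})\ge \sup_{B_\epsilon(x_k)}v-\eta 2^{-(k+1)}$, together with the stopping rule $\tau$ equal to the first time the token reaches the contact set $A^v$ (recall the game also terminates automatically once the token lands in $\Gamma$).

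The key step is to verify that $M_k:=v(x_k)-\eta 2^{-k}$ is a submartingale under $\P^{x_0}_{\tau,S_I,S_{II}}$ up to time $\tau$. For $k<\tau$ the token sits in $\Omega\setminus A^v$, so the third hypothesis gives $v(x_k)\le \tfrac12\big(\sup_{B_\epsilon(x_k)}v+\inf_{B_\epsilon(x_k)}v\big)$. Player~I wins the toss with probability $\tfrac12$ and then nearly attains the supremum, while in the complementary event $v(x_{k+1})\ge \inf_{B_\epsilon(x_k)}v$ whatever Player~II does; conditioning on the history $h_k$ therefore gives $\mathbb{E}[v(x_{k+1})\mid h_k]\ge \tfrac12\big(\sup_{B_\epsilon(x_k)}v-\eta 2^{-(k+1)}\big)+\tfrac12\inf_{B_\epsilon(x_k)}v\ge v(x_k)-\eta 2^{-(k+2)}$, and a direct computation with the discount factors then yields $\mathbb{E}[M_{k+1}\mid h_k]\ge M_k$, so the stopped process $M_{k\wedge\tau}$ is a submartingale.

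Finally I would conclude by optional stopping. If the game does not terminate almost surely under this choice, then $V_{x_0,II}(\tau,S_I,S_{II})=+\infty\ge v(x_0)$ by the penalization convention and there is nothing to prove. Otherwise $\tau<\infty$ a.s.; using that $v$ is bounded and applying the submartingale inequality to $M_{\tau\wedge n}$ followed by dominated convergence, I get $\mathbb{E}[M_\tau]\ge M_0=v(x_0)-\eta$, hence $\mathbb{E}[v(x_\tau)]\ge \mathbb{E}[M_\tau]\ge v(x_0)-\eta$. At the stopping time one has $\tilde{F}(x_\tau)\ge v(x_\tau)$, since on $A^v$ we have $\tilde{F}=\Psi=v$ while on $\Gamma$ we have $\tilde{F}=F\ge v$; therefore $V_{x_0,II}(\tau,S_I,S_{II})=\mathbb{E}[\tilde{F}(x_\tau)]\ge v(x_0)-\eta$. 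Taking the supremum over Player~I's choices, the infimum over $S_{II}$, and then $\eta\to 0$ gives $v\le u_{II}^\epsilon$.

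The step I expect to be the main obstacle is the optional stopping: one must secure the integrability needed to pass from the bounded times $\tau\wedge n$ to $\tau$. It is worth stressing that here the penalization convention for $V_{x_0,II}$ works in our favor, so that we need not prove almost-sure termination separately — non-terminating plays return $+\infty$ and only strengthen the inequality. A secondary technical point is to reconcile the boundary hypothesis $v\le F$ with the set $\Gamma_\epsilon$ on which the game actually ends, which is handled by the fact that a token leaving $\Omega$ in a step of size at most $\epsilon\le\gamma$ lands in $\Gamma$.
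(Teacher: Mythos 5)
Your proof is correct in substance, but it takes a genuinely different route from the paper's. The paper runs no martingale argument at all: it reduces the lemma to the comparison/uniqueness theory for ordinary, obstacle-free tug-of-war from \cite{PSSW}, via Remark \ref{nota7}. Namely, it treats $Y=\Gamma\cup A^v$ as the boundary, picks a Lipschitz extension $\hat{F}$ of the data with $v\leq \hat{F}\leq u_{II}^\epsilon$ on $Y$, and lets $w^\epsilon$ be the value of the ordinary game with data $\hat F$; since $v$ is discrete $\epsilon$ subharmonic off $A^v$ and $v\leq\hat{F}$ on $Y$, \cite{PSSW} gives $v\leq w^\epsilon$, while $w^\epsilon\leq u_{II}^\epsilon$ holds because Player I can imitate the ordinary-game strategy inside the obstacle game, stopping when the token reaches $A^v$. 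Your argument in effect unpacks that citation: the strategy ``move $\eta 2^{-(k+1)}$-close to the sup, stop at the first visit to $A^v$'' together with the submartingale $M_k=v(x_k)-\eta 2^{-k}$ is precisely the mechanism behind the comparison result being cited, and your computation ($\tfrac14+\tfrac12\leq 1$ in the discount bookkeeping, $\tilde{F}(x_\tau)\geq v(x_\tau)$ on $A^v\cup\Gamma$, the $\inf_{S_{II}}\sup_{\tau,S_I}$ logic with $S_I$ independent of $S_{II}$) checks out. What your version buys: it is self-contained, and it makes explicit why the $+\infty$ penalization in $V_{x_0,II}$ is exactly what lets one target $u_{II}^\epsilon$ without proving almost-sure termination --- the same direct argument aimed at $u_I^\epsilon$ would fail, since non-termination there costs $-\infty$. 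What the paper's version buys: brevity, and it quarantines the measure-theoretic technicalities (measurable selection of near-optimal moves, optional stopping) inside the cited results.

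Two caveats, both inherited from the paper's setup rather than genuine flaws in your proposal. First, your optional-stopping step uses that $v$ is bounded and measurable; neither follows from the lemma's stated hypotheses (a priori $\sup_{B_\epsilon(x)}v$ could even be $+\infty$), and the paper assumes the same implicitly through \cite{PSSW}. Second, the paper defines a stopping rule to be \emph{finite everywhere}, whereas your hitting time of $A^v$ need not be; you resolve this via the $+\infty$ convention for non-terminating plays, which is consistent with the paper's payoff definitions but in tension with the ``finite everywhere'' requirement, and note that truncating at $\tau\wedge n$ would not repair it directly, since the forced payoff $\Psi(x_n)\leq v(x_n)$ points the wrong way. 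Finally, the hypothesis as printed gives $v\leq F$ only on $\partial\Omega$, while your last step needs it on $\Gamma_\epsilon$ (where the game actually ends); you correctly read the statement in the intended way, and the paper's own proof glosses the same point.
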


\begin{proof}
For  $x_0 \in A^v$ we have $v(x_0)=\Psi (x_0) \leq u_{II}^\epsilon (x_0)$.

Assume $x_0 \in \Omega \setminus A^v$.
Let $w^\epsilon$ be the value of discrete $\epsilon$ tug of war (without obstacle) on $\Omega$ as described in the remark \ref{nota7} with $A=A^v$ and $\hat{F}$ a Lipshitz extension of $F$ to $A^v$ such that $v \leq \hat{F} \leq u_{II}^\epsilon$.

Since $v\leq \hat{F}$ and $v$ is subharmonic in $\Omega\setminus{A^v}$, by \cite{PSSW} we have that $$v(x_0)\leq w^\epsilon(x_0).$$
We have $$w^\epsilon\leq u_{II}^\epsilon$$ on all of $\Omega$ by the remark. Therefore $$v\leq u_{II}^\epsilon$$ on $\Omega \setminus{A^v}$ as well.
\end{proof}

Now we are ready to prove existence of a unique value of the game.

\begin{proof}[Proof of Theorem~\ref{existencia.unicidad.teo.intro}]
We always have $u_I^\epsilon \leq u_{II}^\epsilon$.
For $x_0\in A^{u_{II}^\epsilon}$ we have $$ u_{II}^\epsilon(x_0)=\Psi(x_0) \leq u_{I}^\epsilon(x_0).$$

Assume $x_0\in  \Omega \setminus A^ {u_{II}^\epsilon}$.
Let $w^\epsilon$ be as in the remark with $A=A^{u_{II}^\epsilon}$.
We have that
$ u_{II}^\epsilon$ is $\epsilon$ game harmonic on $\Omega \setminus A^{u_{II}^\epsilon}$ therefore, by \cite{PSSW}, $u_{II}^\epsilon=w^\epsilon$. And, by the remark, we have $$w^\epsilon (x_0) \leq u_{I}^\epsilon (x_0).$$
\end{proof}

We will now drop the subscripts and let $u^\epsilon=u^\epsilon_I=u^\epsilon_{II}$.

We now prove a small lemma that will be needed for the proof of Lemma ~\ref{lewy.stamp}, the Lewy-Stampaccia Lemma.

\begin{lemma} \label{inequality}
We have that $\Psi$ is discrete $\epsilon$ infinity superharmonic on the coincidence set, i.e.
$$
\Psi (x) - \frac{1}{2} \left( \sup_{{B}_\epsilon (x) }
\Psi (y) +
\inf_{{B}_\epsilon (x) } \Psi (y)\right) \geq 0, \qquad x \in A^{u^\epsilon}.
$$
\end{lemma}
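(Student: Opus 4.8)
The plan is to derive this inequality directly from the two facts about $u^\epsilon$ that are already encoded in the Dynamic Programming Principle: first, that $u^\epsilon$ is discrete $\epsilon$ infinity superharmonic everywhere in $\Omega$, and second, that $u^\epsilon$ lies above $\Psi$. The role of the coincidence set is simply to convert the average-inequality for $u^\epsilon$ into one for $\Psi$, by replacing $u^\epsilon(x)$ with $\Psi(x)$ at the base point while keeping $u^\epsilon$ as an upper bound for $\Psi$ at the surrounding points.

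Concretely, I would fix a point $x\in A^{u^\epsilon}$, so that by definition of the coincidence set $u^\epsilon(x)=\Psi(x)$. The DPP gives, for every $x\in\Omega$, the supersolution inequality
$$
u^\epsilon(x)\geq \frac{1}{2}\left(\sup_{y\in {B}_\epsilon(x)}u^\epsilon(y)+\inf_{y\in {B}_\epsilon(x)}u^\epsilon(y)\right),
$$
since $u^\epsilon(x)$ is the maximum of $\Psi(x)$ and this average. Substituting $u^\epsilon(x)=\Psi(x)$ on the left yields
$$
\Psi(x)\geq \frac{1}{2}\left(\sup_{y\in {B}_\epsilon(x)}u^\epsilon(y)+\inf_{y\in {B}_\epsilon(x)}u^\epsilon(y)\right).
$$
It then remains to bound the right-hand side below by the corresponding average of $\Psi$.

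For that step I would use that $u^\epsilon\geq\Psi$ pointwise on all of ${B}_\epsilon(x)$, and this is where the only real care is needed: the ball ${B}_\epsilon(x)$ may protrude from $\Omega$ into the boundary strip $\Gamma$. On the part of ${B}_\epsilon(x)$ lying in $\Omega$ we have $u^\epsilon\geq\Psi$ by the second line of the discrete obstacle problem, while on the part lying in $\Gamma$ we have $u^\epsilon=F\geq\Psi$ by the boundary condition together with the standing assumption $\Psi\leq F$ in $\Gamma$. Hence $u^\epsilon(y)\geq\Psi(y)$ for all $y\in {B}_\epsilon(x)$, so $\sup_{{B}_\epsilon(x)}u^\epsilon\geq\sup_{{B}_\epsilon(x)}\Psi$ and $\inf_{{B}_\epsilon(x)}u^\epsilon\geq\inf_{{B}_\epsilon(x)}\Psi$. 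Chaining these bounds with the displayed inequality gives
$$
\Psi(x)\geq \frac{1}{2}\left(\sup_{y\in {B}_\epsilon(x)}\Psi(y)+\inf_{y\in {B}_\epsilon(x)}\Psi(y)\right),
$$
which is precisely the claimed discrete superharmonicity of $\Psi$ on $A^{u^\epsilon}$. There is no substantial obstacle here; the statement is essentially a one-line consequence of the DPP, and the only point requiring attention is verifying that $u^\epsilon$ dominates $\Psi$ even on the portion of ${B}_\epsilon(x)$ that meets $\Gamma$.
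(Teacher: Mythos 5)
Your proposal is correct and follows essentially the same route as the paper's proof: at a point of the coincidence set, replace $u^\epsilon(x)$ by $\Psi(x)$ in the DPP supersolution inequality and then use $u^\epsilon \geq \Psi$ to pass from the averages of $u^\epsilon$ to those of $\Psi$. Your extra observation that on the portion of $B_\epsilon(x)$ meeting $\Gamma$ one has $u^\epsilon = F \geq \Psi$ is a careful touch the paper leaves implicit (it only invokes $u^\epsilon \geq \Psi$ on $\Omega$), but it does not change the argument.
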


\begin{proof}
In the set $A^{u^\epsilon}$ we have
\[
\begin{split}
\Psi(x) = u^\epsilon(x) &= \max \left\{\Psi(x), \frac{1}{2} ( \sup_{{B}_\epsilon (x) } u^\epsilon (y)+
 \inf_{{B}_\epsilon (x) } u^\epsilon(y)  )\right\}
\\ & \geq  \frac{1}{2} ( \sup_{{B}_\epsilon (x) } u^\epsilon (y)+
 \inf_{{B}_\epsilon (x) } u^\epsilon(y)  )\\
&
\geq  \frac{1}{2} ( \sup_{{B}_\epsilon (x) } \Psi (y)+
 \inf_{{B}_\epsilon (x) } \Psi(y)  ).
\end{split}
\]
The last inequality holds because
$u^\epsilon \geq \Psi$ on $\Omega$.
\end{proof}

\begin{proof}[Proof of Lemma \ref{lewy.stamp}]
The first inequality is immediate from the dynamic programming principle. If $x\not\in A^{u^\epsilon}$ then the second inequality is clear since $u^\epsilon$ is discrete $\epsilon$ infinity harmonic there. (Also but the dynamic programming principle).
If $x\in A^{u^\epsilon}$, then, from the fact that $u^\epsilon
\geq
\Psi$ on $\Omega$
and $u^\epsilon (x) = \Psi (x)$ for $x \in
A^{u^\epsilon}$, we get the last inequality.
\end{proof}
\end{section}


\begin{section}{Limit as $\epsilon \to 0$.} \label{sec-limit}
\setcounter{equation}{0}


In this section we prove our main result Theorem ~\ref{convergencia.teo.intro} regarding the limit of the game value functions.

Recall the discrete distance is given by
$d_\epsilon(x,y)=
\epsilon \lceil{\frac{|x-y|}{\epsilon}}\rceil $.
The proof of Lemma \ref{d.lip}, that the game value function is Lipschitz with respect to the $d_\epsilon$ metric, is now immediate
from \cite{PSSW}. The proof of the Uniform Lipschitz Lemma 3.5 from \cite{PSSW} shows that the Lipschitz constant depends only on the Lipschitz constants of $F$ and $\Psi$.

We are now in a position  to apply the following variant of the Arzela-Ascoli
Lemma. For its proof we refer the reader to \cite{MPR3} Lemma 4.2.

\begin{lemma}\label{lem.ascoli.arzela} Fix $\delta>0$. Let $\{u^\epsilon : \overline{\Omega}
\to \RR,\ \delta \geq \epsilon>0\}$ be a set of functions such that
\begin{enumerate}
\item there exists $C>0$ so that $|u^\eps(x)|<C$ for
    every $\delta \geq \eps>0$ and every $x \in \overline{\Omega}$,
\item \label{cond:2}
given $\eta>0$ there are constants
    $r_0$ and $\epsilon_0$ such that for every $\epsilon < \epsilon_0$
    and any $x, y \in \overline{\Omega}$ with $|x - y | < r_0 $
    it holds
$$
|u^\epsilon (x) - u^\epsilon (y)| < \eta.
$$
\end{enumerate}
Then, there exists  a uniformly continuous function $u:
\overline{\Omega} \to \RR$ and a subsequence denoted by
$\{u^{\epsilon_j} \}$ such that
\[
\begin{split}
u^{\epsilon_j }\to u \qquad\textrm{ uniformly
in}\quad\overline{\Omega},
\end{split}
\]
as $j\to \infty$.
\end{lemma}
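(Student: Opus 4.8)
The plan is to run the classical Arzel\`a--Ascoli scheme---pointwise precompactness on a countable dense set obtained by a diagonal extraction, then an upgrade to uniform convergence via equicontinuity---while accounting for the fact that hypothesis \eqref{cond:2} supplies only \emph{asymptotic} equicontinuity: the modulus of continuity is uniform in the family only once $\epsilon<\epsilon_0$. The key structural observation is that this asymptotic version is exactly what a subsequence extraction needs, since the tail of a sequence $\epsilon_j\to0$ eventually satisfies $\epsilon_j<\epsilon_0$.

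First I would fix any sequence $\epsilon_j\to0$ with $\delta\ge\epsilon_j>0$ and choose a countable dense subset $D=\{z_1,z_2,\dots\}$ of the compact set $\overline{\Omega}$ (compact because $\Omega$ is a bounded domain). By hypothesis (1) the real sequence $(u^{\epsilon_j}(z_1))_j$ is bounded, hence has a convergent subsequence; extracting successively at $z_2,z_3,\dots$ and passing to the diagonal yields one subsequence, still denoted $(u^{\epsilon_j})_j$, for which $u^{\epsilon_j}(z_k)$ converges for every $k$.

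Next I would show this subsequence is uniformly Cauchy on $\overline{\Omega}$. Given $\eta>0$, apply \eqref{cond:2} with $\eta/3$ to obtain $r_0$ and $\epsilon_0$, and cover $\overline{\Omega}$ by finitely many balls $B_{r_0}(z_{k_1}),\dots,B_{r_0}(z_{k_m})$ centered at points of $D$. For $j,l$ large enough that $\epsilon_j,\epsilon_l<\epsilon_0$ and that $|u^{\epsilon_j}(z_{k_i})-u^{\epsilon_l}(z_{k_i})|<\eta/3$ for all $i=1,\dots,m$ (possible by the diagonal convergence on $D$), every $x\in\overline{\Omega}$ lies in some $B_{r_0}(z_{k_i})$, whence
\[
|u^{\epsilon_j}(x)-u^{\epsilon_l}(x)|
\le|u^{\epsilon_j}(x)-u^{\epsilon_j}(z_{k_i})|
+|u^{\epsilon_j}(z_{k_i})-u^{\epsilon_l}(z_{k_i})|
+|u^{\epsilon_l}(z_{k_i})-u^{\epsilon_l}(x)|<\eta
\]
uniformly in $x$. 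By completeness, $u^{\epsilon_j}$ converges uniformly to some $u:\overline{\Omega}\to\RR$.

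Finally, uniform continuity of $u$ follows by passing to the limit in \eqref{cond:2}: for $|x-y|<r_0$ one has $|u^{\epsilon_j}(x)-u^{\epsilon_j}(y)|<\eta$ as soon as $\epsilon_j<\epsilon_0$, and letting $j\to\infty$ gives $|u(x)-u(y)|\le\eta$. The only point requiring care---and the sole departure from textbook Arzel\`a--Ascoli---is precisely that equicontinuity holds merely for $\epsilon<\epsilon_0$; this is harmless because the uniform Cauchy estimate invokes only indices $j,l$ with $\epsilon_j,\epsilon_l$ arbitrarily small, so the relevant tail of the extracted sequence is genuinely equicontinuous, which is all the covering argument uses.
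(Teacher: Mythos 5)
Your proof is correct, and it follows exactly the route of the proof the paper points to (it cites Lemma 4.2 of \cite{MPR3} rather than arguing in-house): a diagonal extraction on a countable dense subset of the compact set $\overline{\Omega}$, a finite $r_0$-cover plus a three-term triangle inequality to get the uniform Cauchy property, and recovery of the modulus of continuity in the limit. You also correctly isolate the one non-textbook point---that condition \eqref{cond:2} gives equicontinuity only for $\epsilon<\epsilon_0$, which suffices because the Cauchy estimate and the limiting modulus only ever invoke the tail of the extracted sequence (and, as a bonus, your argument never assumes the individual $u^\epsilon$ are continuous, which is needed here since the game values need not be).
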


\begin{theorem} \label{lema.conver}
If $F$ and $\Psi$ are Lipschitz continuous functions then there exists a subsequence of the values of the game
$u^{\epsilon_j}$ that converges uniformly to a continuous function $u$ in $\overline{\Omega}$,
$$
\lim_{\epsilon_j \to 0} u^{\epsilon_j} = u.
$$
\end{theorem}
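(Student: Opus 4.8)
The plan is to obtain the convergent subsequence by applying the Arzel\`a--Ascoli type result of Lemma~\ref{lem.ascoli.arzela} to the family $\{u^\epsilon\}_{0<\epsilon\leq\delta}$. The entire argument then reduces to verifying its two hypotheses: the uniform bound~(1) and the asymptotic equicontinuity~(2).

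For the uniform bound, I would observe that at the end of any play the payoff collected is $\tilde F(x_\tau)$, which is either a value of $F$ on $\Gamma_\epsilon$ or a value of $\Psi$ in $\overline{\Omega}$. Since $\Omega$ is bounded, $\Gamma$ is a bounded set on which the Lipschitz function $F$ is bounded, and $\Psi$ is continuous on the compact set $\overline{\Omega}$ hence bounded there. Because $u^\epsilon$ is a supremum over stopping rules and strategies of an infimum of expectations of $\tilde F(x_\tau)$, it is squeezed between the extreme values of these payoffs: $|u^\epsilon(x)|\leq C_0$ with $C_0=\max\{\sup_{\Gamma}|F|,\ \sup_{\overline{\Omega}}|\Psi|\}$, a constant independent of $\epsilon$. (The lower bound $u^\epsilon\geq\Psi\geq\inf_{\overline{\Omega}}\Psi$ also follows directly from the DPP.) This is exactly hypothesis~(1).

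For the equicontinuity hypothesis~(2), I would invoke the discrete Lipschitz estimate of Lemma~\ref{d.lip}: there is a constant $C$, independent of $\epsilon$, with $|u^\epsilon(x)-u^\epsilon(y)|\leq C\,d_\epsilon(x,y)$. Since $d_\epsilon(x,y)=\epsilon\lceil|x-y|/\epsilon\rceil\leq|x-y|+\epsilon$, this yields $|u^\epsilon(x)-u^\epsilon(y)|\leq C(|x-y|+\epsilon)$. Given $\eta>0$, choosing $r_0=\epsilon_0=\eta/(2C)$ forces $|u^\epsilon(x)-u^\epsilon(y)|<\eta$ whenever $|x-y|<r_0$ and $\epsilon<\epsilon_0$, which is precisely~(2).

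With both hypotheses checked, Lemma~\ref{lem.ascoli.arzela} delivers a uniformly continuous $u:\overline{\Omega}\to\RR$ and a subsequence $u^{\epsilon_j}\to u$ uniformly on $\overline{\Omega}$, completing the proof. I do not anticipate a serious obstacle here, as the real content resides in the already-established Lemma~\ref{d.lip}; the only point demanding care is the estimate $d_\epsilon(x,y)\leq|x-y|+\epsilon$, which is what makes the extra $+\epsilon$ in the discrete distance harmless in the limit and lets a single $\eta$ control the modulus of continuity uniformly for all small $\epsilon$.
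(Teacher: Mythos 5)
Your proposal is correct and follows essentially the same route as the paper: apply the Arzel\`a--Ascoli variant of Lemma~\ref{lem.ascoli.arzela}, obtaining the uniform bound from the payoff functions $F$ and $\Psi$ and the equicontinuity from the uniform-in-$\epsilon$ Lipschitz estimate of Lemma~\ref{d.lip}. In fact your verification of condition~(2) is more careful than the paper's one-line choice of $r_0=\eta/L$, since your estimate $d_\epsilon(x,y)\leq|x-y|+\epsilon$ together with $r_0=\epsilon_0=\eta/(2C)$ correctly absorbs the extra $+\epsilon$ in the discrete distance.
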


\begin{proof}  Lemma \ref{lem.ascoli.arzela} can be applied since condition 1 holds with
\newline $C=\max \{F(x),\psi(x)\}$ and for condition 2 we can take, for instance, $\eps_0=\delta$ and  $r_0=\frac{\eta}{L}$, where $L$ is the Lipschitz constant of $u^{\delta}$ with respect to $d_\delta$ which does not depend on $\delta$.
\end{proof}

\begin{remark} {\rm
If we assume that $\Psi$ is $C^2$ the Lewy-Stampacchia
estimate gives that there exists $K>0$ such that
$$
0\leq u^\epsilon (x) - \frac{1}{2} \left( \sup_{B_\epsilon (x) }
u^\epsilon (y) +
\inf_{B_\epsilon (x) } u^\epsilon (y)\right) \leq K \epsilon^2.
$$
From here it follows that we can get estimates on the values
$u^\epsilon$ that allow us to pass to the limit using the variant of the well-known
Arzela-Ascoli type result from \cite{MPR3}, Lemma~\ref{lem.ascoli.arzela}.
}
\end{remark}

Next we prove that this uniform limit of the values of the game is the viscosity solution of the obstacle problem for the infinity Laplacian.
We are now ready to prove our main result, Theorem ~\ref{convergencia.teo.intro}

We prove this theorem by comparing the $u^{\epsilon_j}$ to appropriately defined discrete harmonic $v^{\epsilon_j}$ with fixed boundary conditions which we know converge to an infinity harmonic function. We prove uniqueness by proving that our limit is the least super harmonic function that lies above the obstacle.

\begin{proof} [Proof of Theorem \ref{convergencia.teo.intro}]
We first prove that our limit is infinity harmonic where it lies strictly above the obstacle. Passing to a subsequence if necessary we let $u= \lim u^{\epsilon_j}$. Fix $x_0 \in \Omega \setminus A^u$, and choose $r$ such that  $B_r(x_0)$ is included in the set $ \Omega\setminus A^u$ (which is open). Given $\delta$, for $\epsilon_j$ small enough, we have that $|u-u^{\epsilon_j}|<\delta$ on $B_r(x_0)$.

Define $v^{\epsilon_j}$ to be the discrete $\epsilon_j$ harmonic function that agrees with $u$ on $\partial B_r(x_0)$. Then we have
$v^{\epsilon_j} - \delta < u^{\epsilon_j} < v^{\epsilon_j} + \delta$ on $\partial B_r(x_0)$. Since $v^{\epsilon_j} - \delta$ and $v^{\epsilon_j} + \delta$ are also discrete harmonic on $B_r(x_0)$ with lower and higher boundary values respectively than $u^{\epsilon_j} $, with the help of the comparison principle for discrete harmonic functions we get the inequalities on all of $B_r(x_0)$.
By \cite{MPR3} and \cite{PSSW}, we know that $v^{\epsilon_j}$ converges uniformly to an infinity harmonic function $v$ on $B_r(x_0)$. Therefore, by the sandwich lemma and sending $\delta \to 0$, we have that $u=v$ thus $u$ is infinity harmonic on $B_r(x_0)$ and therefore $u$ is infinity harmonic on $\Omega \setminus A^u$.

Now let $x_0\in A^u$. We have that $\Psi$ is $C^2$ and for $\epsilon$ small enough satisfies
$$
\Psi(x_0)\geq \frac{1}{2}  \sup_{y\in {B}_\epsilon (x_0) } \Psi(y) +
\frac{1}{2} \inf_{y \in  {B}_\epsilon (x_0) } \Psi(y),
$$
therefore, in $A^u$ we have that
\begin{equation*}
-\Delta_\infty \Psi  = - \langle D^2 u \frac{D\Psi}{|D\Psi|}, \frac{D\Psi}{|D\Psi|} \rangle \geq 0.
\end{equation*}

From this and the proof of Lemma \ref{lewy.stamp} we have that $u$ also satisfies this inequality on $A^u$.

Therefore we have that the uniform limit of a subsequence of the values of the game, $u$, satisfies
$$
-\Delta_\infty u = 0, \quad \mbox{ in } \Omega\setminus A^u, \qquad \mbox{ and } \qquad -\Delta_\infty u \geq 0, \quad \mbox{ in } \Omega.
$$

We now prove uniqueness.
We define $u_\infty$ to be the least  infinity superharmonic function that is
above the obstacle and the boundary function.

Since $u$ is infinity superharmonic and above the obstacle and boundary function
we get the inequalities
$
u \geq u_\infty \geq \Psi
$
from which we have
$$
A^u \subset A^{u_\infty}.
$$
so on $A^u$ we have $u=u_\infty$.

Now, in the set $\Omega \setminus A^u$, $u$ is a solution to
$-\Delta_\infty u =0$ and $u_\infty$ is a supersolution with the same boundary values
($u_\infty = u = F$ on $\Gamma$ and
$u_\infty = u = \Psi$ on $A^u$). Therefore, the comparison principle for $\Delta_\infty$
implies that
$$
u_\infty \geq u
$$
in  $\Omega \setminus A^u$.
And then we conclude that
$$
u_\infty = u.
$$

Since we have uniqueness of the limit, the whole sequence $u^\epsilon$ converges uniformly.
\end{proof}


\subsection{Convergence of the contact sets} \label{subsect-ContactSet}


We now simplify notation slightly and let $A^{\epsilon_j}:=A^{u^{\epsilon_j}}$ and we discuss the
convergence of the contact sets of the $u^{\epsilon_j}$ to the contact set of the limit function $u$.

We define $$\limsup_{j\to \infty} A^{\eps_j}=\bigcap_{p=1}^\infty \bigcup_{j=p}^\infty A^{\epsilon_j}\qquad
\mbox{and}  \qquad\liminf_{j\to \infty} A^{\eps_j}=\bigcup_{p=1}^\infty \bigcap_{j=p}^\infty A^{\epsilon_j}.$$

Now, let us define $\limsup_{\epsilon \to 0} A^{\epsilon}$ and $\liminf_{\epsilon \to 0} A^{\epsilon}$ as
$$
\limsup_{\epsilon \to 0} A^{\epsilon} = \displaystyle \bigcup_{\eps_j \to 0} \limsup_{j\to \infty} A^{\eps_j},
$$
that is, the smallest set that contains all possible limits along subsequences, and
$$
\liminf_{\epsilon \to 0} A^{\epsilon} = \bigcap_{\eps_j \to 0} \liminf_{j\to \infty} A^{\eps_j},
$$
that is, the largest set that is included in every possible sequential limit.

We have an upper bound for $\limsup_{\epsilon \to 0} A^{\epsilon}$.

\begin{lemma}
It holds that
$$\limsup_{\epsilon \to 0} A^{\epsilon} \subset A^u.$$
\end{lemma}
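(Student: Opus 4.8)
The plan is to unwind the definition of the outer $\limsup$ and then pass to the limit pointwise, using the fact---already established in the proof of Theorem~\ref{convergencia.teo.intro}---that the whole family $u^\epsilon$ converges uniformly (hence pointwise) to $u$ as $\epsilon\to 0$. The statement has no variational or PDE content; it is a soft consequence of convergence together with the definition of the contact set $A^{u^\epsilon}=\{x\in\Omega : u^\epsilon(x)=\Psi(x)\}$.

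First I would fix a point $x\in\limsup_{\epsilon\to0}A^{\epsilon}$. By the definition
$$
\limsup_{\epsilon\to0}A^{\epsilon}=\bigcup_{\epsilon_j\to0}\ \bigcap_{p=1}^\infty\bigcup_{j=p}^\infty A^{\epsilon_j},
$$
membership of $x$ means there is at least one sequence $\epsilon_j\to0$ for which $x\in\bigcap_{p}\bigcup_{j\ge p}A^{\epsilon_j}$; that is, $x$ lies in $A^{\epsilon_j}$ for infinitely many $j$. Extracting this subsequence and relabelling, I obtain $\epsilon_j\to0$ with $x\in A^{\epsilon_j}$ for every $j$, which by the definition of the contact set says precisely
$$
u^{\epsilon_j}(x)=\Psi(x)\qquad\text{for every }j.
$$

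Next I would invoke the pointwise convergence. Since the entire sequence $u^{\epsilon}$ converges uniformly to $u$, in particular $u^{\epsilon_j}(x)\to u(x)$ along the chosen subsequence. Passing to the limit in the identity above, and using that $\Psi(x)$ does not depend on $j$, gives
$$
u(x)=\lim_{j\to\infty}u^{\epsilon_j}(x)=\Psi(x),
$$
so $x\in A^u$. As $x\in\limsup_{\epsilon\to0}A^{\epsilon}$ was arbitrary, this yields $\limsup_{\epsilon\to0}A^{\epsilon}\subset A^u$.

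There is no substantive analytic obstacle here; the only point requiring care is the correct parsing of the nested set-theoretic $\limsup$, namely reducing membership to the existence of a single sequence $\epsilon_j\to0$ along which $x$ lies in the contact sets infinitely often. What makes the argument clean is that uniform convergence was promoted to the \emph{entire} family $u^\epsilon$ in Theorem~\ref{convergencia.teo.intro} (via uniqueness of the limit), so every subsequential limit of $u^{\epsilon_j}(x)$ is forced to equal $u(x)$. Were one only to have convergence along some unrelated subsequence, one would have to argue separately that the boundary/contact value is preserved in the limit; here that is automatic because the contact condition pins $u^{\epsilon_j}(x)$ exactly to $\Psi(x)$.
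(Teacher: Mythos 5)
Your proof is correct, and it takes a genuinely different route from the paper's. You argue pointwise: unwinding the definition of $\limsup_{\epsilon\to 0}A^{\epsilon}$ yields a sequence $\epsilon_j\to 0$ with $u^{\epsilon_j}(x)=\Psi(x)$ for all $j$, and then the full-family uniform convergence secured at the end of the proof of Theorem~\ref{convergencia.teo.intro} lets you pass to the limit in this identity. The paper instead argues by neighborhoods: for any compact $K\subset\subset\Omega\setminus A^u$ it uses continuity and compactness to obtain $u-\Psi>\eta$ on $K$, then uniform convergence to get $u^{\epsilon}-\Psi>\eta/2$ on $K$ for all $\epsilon<\epsilon_0$, hence $A^{\epsilon}\subset V=\Omega\setminus K$ for all small $\epsilon$, and concludes by intersecting over all such neighborhoods $V$ of $A^u$ (which requires $\Omega\setminus A^u$ open, i.e.\ continuity of $u-\Psi$). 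Your argument is more elementary: it needs only pointwise convergence at the single point in question, no compact exhaustion, and not even the relative closedness of $A^u$; and you correctly identify the one delicate point, namely that whole-family convergence (not merely subsequential) is what makes the sequence extracted from the $\limsup$ harmless. What the paper's route buys in exchange is a strictly stronger conclusion than the stated set-theoretic containment: the contact sets $A^{\epsilon}$ are \emph{eventually contained in every neighborhood} of $A^u$, uniformly in $\epsilon$ --- a locally uniform (Hausdorff-type) upper semicontinuity that the pointwise $\limsup$ containment does not by itself imply, and which is the natural companion to the lower-bound lemma that follows in the paper. Both proofs rest on the same single analytic input, the uniform convergence $u^{\epsilon}\to u$.
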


\begin{proof}
Let $K \subset \subset {\Omega} \setminus A^u$ and so $V=\Omega \setminus K$ is a neighborhood of $A^u$. There exists an $\eta$ such that $u-\Psi > \eta$ in $K$. By the uniform convergence there exists an $\eps_0$ depending on $K$ such that $u^{\eps} - \Psi >\eta/2$ for $\eps <\eps_0$.
Then we have for every $\epsilon <\epsilon_0$, $$A^{\eps} \subset  V.$$
Thus we have $$\limsup_{\epsilon_j \to 0} A^{\eps_j} \subset V$$
for any sequence $\epsilon_j \to 0$ and
for any neighborhood $V$ of $A^u$. Therefore $$\limsup_{\epsilon \to 0} A^{\eps} \subset A^u.$$
\end{proof}

To obtain a lower bound for $\liminf_{\epsilon \to 0} A^{\eps}$ we need to assume an extra condition on the obstacle.

\begin{lemma}
Assume that $\Psi$ satisfies
$
-\Delta_\infty \Psi(x_0) >0$ in the viscosity sense in $(A^u)^o$ then we
have $$\overline{(A^u)^o} \subset \liminf_{\epsilon \to 0} A^{\eps}.$$
\end{lemma}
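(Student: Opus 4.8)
The plan is to reduce everything to the pointwise claim that for each $x_0\in(A^u)^o$ there is an $\epsilon_0>0$ with $x_0\in A^\epsilon$ for all $\epsilon<\epsilon_0$. Once this is known, membership in $\liminf_{\epsilon\to0}A^\epsilon$ is immediate from the definition: for any sequence $\epsilon_j\to0$ one has $\epsilon_j<\epsilon_0$ for large $j$, hence $x_0\in A^{\epsilon_j}$ eventually, so $x_0\in\liminf_{j}A^{\epsilon_j}$ for every such sequence, i.e. $x_0\in\liminf_{\epsilon\to0}A^\epsilon$. The first real step is to convert the hypothesis into a \emph{discrete} strict supersolution estimate. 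Fixing a ball $\overline{B_{d}(x_0)}\subset(A^u)^o$ and writing $c=\min_{\overline{B_d(x_0)}}(-\Delta_\infty\Psi)>0$, the $C^2$ asymptotic expansion $\tfrac12\big(\sup_{B_\epsilon(x)}\Psi+\inf_{B_\epsilon(x)}\Psi\big)=\Psi(x)+\tfrac{\epsilon^2}{2}\Delta_\infty\Psi(x)+o(\epsilon^2)$ gives, for $\epsilon$ small and $x$ in a slightly smaller ball,
$$
\Psi(x)-\tfrac12\Big(\sup_{B_\epsilon(x)}\Psi+\inf_{B_\epsilon(x)}\Psi\Big)\ \ge\ \tfrac c4\,\epsilon^2 .
$$
(The points where $D\Psi=0$ are absorbed by the viscosity meaning of $-\Delta_\infty\Psi>0$.)

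Next I would use that on the non-coincidence set $N^\epsilon=\Omega\setminus A^\epsilon$ the value $u^\epsilon$ is \emph{exactly} discrete infinity harmonic, so that $g:=u^\epsilon-\Psi\ge0$ satisfies, at every $x\in N^\epsilon\cap\overline{B_d(x_0)}$,
$$
g(x)=\tfrac12\Big(\sup_{B_\epsilon(x)}u^\epsilon+\inf_{B_\epsilon(x)}u^\epsilon\Big)-\Psi(x)\ \le\ \sup_{B_\epsilon(x)}g-\tfrac c4\,\epsilon^2 ,
$$
where the last step combines the displayed gap with $\sup u^\epsilon-\sup\Psi\le\sup g$ and $\inf u^\epsilon-\inf\Psi\le\sup g$. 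This forbids any interior local maximum of $g$ inside $N^\epsilon$, and quantitatively it lets me build, starting from any $x_0$ with $g(x_0)>0$, a chain $x_0,x_1,x_2,\dots$ with $|x_{k+1}-x_k|\le\epsilon$ and $g(x_{k+1})\ge g(x_k)+\tfrac c4\epsilon^2$; the chain stays in $N^\epsilon$ as long as it stays in the ball. Walking for $\lfloor d/\epsilon\rfloor$ steps produces the penetration bound
$$
\sup_{B_d(x_0)}\big(u^\epsilon-\Psi\big)\ \ge\ g(x_0)+\tfrac c4\,\epsilon\,d .
$$

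Taking the contrapositive, $x_0\in A^\epsilon$ as soon as $\sup_{B_d(x_0)}(u^\epsilon-\Psi)<\tfrac c4\,d\,\epsilon$, and since $u=\Psi$ on $\overline{B_d(x_0)}\subset(A^u)^o$ this overshoot equals $\sup_{B_d(x_0)}(u^\epsilon-u)\to0$ by uniform convergence. \textbf{The main obstacle is exactly the rate:} I need the overshoot to be $o(\epsilon)$ so as to beat the linear threshold $\tfrac c4 d\,\epsilon$, whereas bare uniform convergence gives only $o(1)$; the penetration estimate is genuinely linear in $\epsilon$, so a soft compactness argument cannot close the gap (indeed the continuous limiting statement is vacuous, as $(A^u)^o\subset\{u=\Psi\}$). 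This is where I expect the work to concentrate, and I would extract the missing rate from the Lewy--Stampacchia estimate of the Remark, $0\le u^\epsilon-\tfrac12(\sup u^\epsilon+\inf u^\epsilon)\le K\epsilon^2$, which pins $u^\epsilon$ to within $O(\epsilon^2)$ of a discrete harmonic function and, together with the strict discrete supersolution gap above, should force $u^\epsilon-\Psi=o(\epsilon)$ on compact subsets of $(A^u)^o$. Granting this, every $x_0\in(A^u)^o$ lies in $A^\epsilon$ for all small $\epsilon$, hence in $\liminf_{\epsilon\to0}A^\epsilon$. Finally, since each $A^\epsilon$ is closed and the estimate is locally uniform on compact subsets of $(A^u)^o$, a standard limiting argument approximating boundary points of $(A^u)^o$ by interior ones extends the inclusion to $\overline{(A^u)^o}$, giving the claim.
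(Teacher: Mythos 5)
Your penetration computation is correct as far as it goes (the chain argument giving $\sup_{B_d(x_0)}(u^\eps-\Psi)\geq g(x_0)+\tfrac{c}{4}\,d\,\eps$ whenever $g(x_0)>0$ is sound), but the proof does not close, and you say so yourself: the step ``Granting this'' is exactly the missing content. The Lewy--Stampacchia bound of Lemma \ref{lewy.stamp} controls only the \emph{discrete excess} $u^\eps-\tfrac12(\sup_{B_\eps}u^\eps+\inf_{B_\eps}u^\eps)$; it says nothing about the distance from $u^\eps$ to its limit $u$, and in particular cannot force contact: any exactly discrete-harmonic function has zero excess and satisfies the bound while staying a fixed distance above $\Psi$. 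So the conjectured rate $\sup_K(u^\eps-\Psi)=o(\eps)$ on compact subsets of $(A^u)^o$ does not follow from it, is nowhere established in the paper, and would amount to a quantitative convergence-rate theorem for tug-of-war values that is genuinely harder than the lemma itself. Without it, your contrapositive trigger $\sup_{B_d(x_0)}(u^\eps-\Psi)<\tfrac{c}{4}d\eps$ never fires. A secondary problem: your discrete gap $\Psi(x)-\tfrac12(\sup_{B_\eps(x)}\Psi+\inf_{B_\eps(x)}\Psi)\geq\tfrac{c}{4}\eps^2$ is obtained from a $C^2$ Taylor expansion, whereas the hypothesis is only $-\Delta_\infty\Psi>0$ \emph{in the viscosity sense}; for a merely Lipschitz viscosity supersolution no such pointwise expansion exists, and the parenthetical about points where $D\Psi=0$ does not repair this.

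The deeper issue is that you aimed at the wrong target and then (correctly) found it out of reach. You tried to prove the pointwise-eventual statement $x_0\in A^\eps$ for all small $\eps$, which matches the set-theoretic $\liminf$ literally and genuinely needs a rate; your claim that ``a soft compactness argument cannot close the gap'' is true only for that strong version. The paper proves the lemma by precisely such a soft argument: if $A^{\eps_j}\cap B_\delta(x_0)=\emptyset$ along some sequence $\eps_j\to0$, with $B_\delta(x_0)\subset(A^u)^o$, then each $u^{\eps_j}$ is discrete infinity harmonic on that ball, so by the convergence argument in the proof of Theorem \ref{convergencia.teo.intro} the uniform limit $u$ is infinity harmonic on $B_{\delta/2}(x_0)$; but $u=\Psi$ there, contradicting $-\Delta_\infty\Psi>0$. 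Hence every neighborhood of $x_0$ meets $A^{\eps_j}$ for all large $j$, i.e.\ $x_0$ lies in the Kuratowski lower limit of the contact sets --- which is how the paper in effect reads its $\liminf$ (note its closing appeal to the $\liminf$ being ``a closed set'', true for the Kuratowski limit but not in general for $\bigcup_p\bigcap_{j\geq p}A^{\eps_j}$). Your final step also inherits this mismatch: local uniformity of an estimate on compacts of $(A^u)^o$ does not put boundary points of $(A^u)^o$ into the set-theoretic $\liminf$, since the threshold $\eps_0(K)$ degenerates as $K$ exhausts $(A^u)^o$; the passage to $\overline{(A^u)^o}$ is automatic only for the Kuratowski limit. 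In short: for the statement as the paper construes it, no rate is needed and the contradiction argument suffices; for the stronger statement you attempted, the key estimate is missing and the proposal is incomplete.
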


\begin{proof}
Fix $x_0 \in (A^u)^o$ and choose any $\delta$ such that $B_\delta(x_0) \subset (A^u)^o$.
If $$u^{\eps_j} (x) > \Psi(x) \, \text{ for all } \, x\in B_\delta(x_0) \text{ and some sequence }\epsilon_j\to 0 $$ then
$$-\Delta_\infty^{\eps_j} u^{\eps_j}(x)=0 \, \text{ for all } \, x\in B_\delta(x_0) \text{ and every } \epsilon_j $$
therefore, by the argument in the proof of Theorem \ref{convergencia.teo.intro},
$$-\Delta_\infty u(x)=0 \, \text{ for } \, x\in B_{\delta/2}(x_0)$$
in the viscosity sense. As $B_{\delta/2}(x_0) \subset (A^u)^o$ we have that $u=\Psi$ there and hence we have
$$-\Delta_\infty \Psi(x_0)=0$$ a contradiction with our hypothesis. Therefore, for any sequence $\epsilon_j \to 0$
there exists $x_j \in A^{\eps_j}$ such that $x_j \to x_0$. Hence, $(A^u)^o \subset \liminf_{\eps_j\to 0} A^{\eps_j}$, and since $\liminf_{\eps_j\to 0} A^{\eps_j}$ is a closed set we get $\overline{(A^u)^o} \subset \liminf_{\eps_j\to 0} A^{\eps_j}$ for every sequence $\eps_j \to 0$.
Therefore $\overline{(A^u)^o} \subset \liminf_{\eps\to 0} A^{\eps}.$
\end{proof}

An immediate consequence of the previous two lemmas is the following result.

\begin{theorem}
Assume that $\Psi$ satisfies
$-\Delta_\infty \Psi(x_0) >0$ in the viscosity sense in $(A^u)^o$ and also assume that the contact set satisfies
$\overline{(A^u)^o} = A^u$,
 then we
have $$ \lim_{\epsilon \to 0} A^{\eps}=\liminf_{\eps\to 0} A^{\eps} =\limsup_{\eps\to 0} A^{\eps}= A^u.$$
\end{theorem}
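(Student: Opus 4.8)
The plan is to obtain the theorem as a direct corollary of the two preceding lemmas, so I would not reprove anything from scratch; instead I would show that the two hypotheses combine with those lemmas to pinch the $\limsup$ and $\liminf$ from above and below by the same set $A^u$. Recall that by definition $\liminf_{\eps\to 0}A^\eps\subset\limsup_{\eps\to 0}A^\eps$, so the entire argument reduces to establishing the two-sided containment $A^u\subset\liminf_{\eps\to 0}A^\eps$ and $\limsup_{\eps\to 0}A^\eps\subset A^u$, after which all three sets coincide and the limit exists.

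First I would invoke the first lemma of this subsection, which gives unconditionally that $\limsup_{\eps\to 0}A^\eps\subset A^u$. Next I would invoke the second lemma: under the viscosity hypothesis $-\Delta_\infty\Psi>0$ on $(A^u)^o$, it yields $\overline{(A^u)^o}\subset\liminf_{\eps\to 0}A^\eps$. Here is where the second standing assumption enters: by hypothesis $\overline{(A^u)^o}=A^u$, so the conclusion of the second lemma upgrades to $A^u\subset\liminf_{\eps\to 0}A^\eps$. Chaining these containments gives
\[
A^u\subset\liminf_{\eps\to 0}A^\eps\subset\limsup_{\eps\to 0}A^\eps\subset A^u,
\]
forcing equality throughout. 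This shows $\liminf_{\eps\to 0}A^\eps=\limsup_{\eps\to 0}A^\eps=A^u$, which is exactly the statement that the limit $\lim_{\eps\to 0}A^\eps$ exists and equals $A^u$.

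There is no genuine obstacle in this final theorem: the substance is entirely carried by the two lemmas, and the only real content here is noticing that the regularity assumption $\overline{(A^u)^o}=A^u$ is precisely what closes the gap between $\overline{(A^u)^o}$ (the set the second lemma delivers) and $A^u$ (the set the first lemma bounds against). The one point worth stating explicitly is that the chain of inclusions is only meaningful because the outermost terms agree, so I would be careful to write out the display above rather than merely asserting equality. Everything else is bookkeeping with the set-theoretic definitions of $\liminf$ and $\limsup$ given earlier in the subsection.
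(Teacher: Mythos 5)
Your proposal is correct and matches the paper exactly: the paper states this theorem as ``an immediate consequence of the previous two lemmas,'' giving no further proof, and your chain $A^u = \overline{(A^u)^o} \subset \liminf_{\eps\to 0} A^{\eps} \subset \limsup_{\eps\to 0} A^{\eps} \subset A^u$ is precisely the intended argument. You even supply the one detail the paper leaves tacit, namely checking that $\liminf \subset \limsup$ holds with the paper's sequential definitions of these set limits.
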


\begin{remark} \label{rem.33} {\rm For a Lipschitz obstacle, it may happen that
$(A^u)^o = \emptyset$. In fact, take in $\Omega =B_1(0)$ and boundary function $F(x)=0$ and obstacle
$\Psi (x) = -3|x| +1$. The solution to the obstacle problem for the infinity Laplacian is given by the cone
$u(x) =  -|x| +1$ and the contact set $A^u$ is just a single point $A^u=\{0\}$.
}
\end{remark}

\end{section}
\medskip

\noindent{\bf Acknowledgments:} JDR was partially supported by
grants MTM2010-18128 and MTM2011-27998, Spain.

SJS would like to thank Luis Caffarelli for suggesting the problem to her.
SJS was partially supported by EMSW21-RTG - Program in Applied and Computational Analysis,
National Science Foundation DMS-0636586, NCE and the International Collaboratory for Emerging Technologies-CoLab,
Portuguese Science And Technology Foundation (FCT)
Project \#UTA06-894.

\end{document}